\documentclass[11pt]{amsart}
\usepackage{amssymb,amsfonts,amsthm}
\usepackage{amsmath}
\usepackage[makeroom]{cancel}
\usepackage{mathtools}
\usepackage{mathrsfs,pifont}
\usepackage{slashed,mathabx} 
\usepackage[bbgreekl]{mathbbol}
\usepackage{tikz}
\usepackage{framed}
\usetikzlibrary{calc}
\usetikzlibrary{arrows.meta,positioning}

\usepackage[font={small,sl}]{caption}
\usepackage[all]{xy}

\usepackage{hyperref}

\newtheorem{theorem}{Theorem}[section]
\newtheorem{lemma}[theorem]{Lemma}
\newtheorem{corollary}[theorem]{Corollary}
\theoremstyle{definition}

\theoremstyle{remark}
\newtheorem{remark}[theorem]{Remark}
\numberwithin{equation}{section}

\calclayout

\makeatletter
\def\blfootnote{\xdef\@thefnmark{}\@footnotetext}
\makeatother

\newcommand{\Sn}{\mathfrak{S}_n}

\begin{document}

\title{Spectral Gap of Biased Adjacent-Transposition Chains}

\author{Gary Greaves}
\address{Division of Mathematical Sciences, Nanyang Technological University, 21 Nanyang Link, Singapore 637371}
\email{gary@ntu.edu.sg}

\author{Haoran Zhu}
\address{Division of Mathematical Sciences, Nanyang Technological University, 21 Nanyang Link, Singapore 637371}
\email{zhuh0031@e.ntu.edu.sg}

\date{}
\maketitle

\begin{abstract}
We establish a sharp lower bound on the spectral gap of the biased adjacent-transposition Markov chain on the symmetric group. 
As a consequence, we resolve a longstanding conjecture of Fill, proving that among all regular probability vectors, the minimum spectral gap of the transition matrix is attained by the uniform probability vector. 
We also characterise the regular probability vectors attaining the minimum spectral gap and determine the exact multiplicity of the corresponding second-largest eigenvalue.
Our proof relies on a novel algebraic decomposition of the transition matrix into elementary orthogonal projections.
\end{abstract}

{\noindent \it Keywords:} extremal spectral gap, adjacent transposition, orthogonal projection, Markov transition matrix, eigenvalue multiplicity.

{\noindent \it Mathematics Subject Classification (2020):} 60J10; 05E18.

\section{Introduction}

Random walks generated by transpositions on the symmetric group form one of the most natural and well-studied classes of Markov chains on permutations~\cite{AD,AF,BBHM,BMRS,D1,D2}.  
In the uniform case, their spectral theory is classical and closely connected with the representation theory of the symmetric group $\mathfrak{S}_n$~\cite{AF,D1,DSC}.  
The introduction of bias changes the picture substantially: the stationary measure becomes non-uniform, the algebraic symmetry of the chain is broken, and the spectral analysis becomes markedly more delicate. 
Despite the apparent simplicity of these models, a comprehensive understanding of their spectral behaviour and mixing times has remained elusive, with exact results largely confined to special cases \cite{BMRS,GLV,HW,MS,MSS}.
 
In 2003, Fill~\cite{Fill} conjectured 
that among all regular biased adjacent-transposition chains, the spectral gap is minimised by the uniform chain.  
The problem of proving or disproving this conjecture, now known as \emph{Fill's Gap Problem}, identifies a natural extremal problem at the interface of probability, combinatorics, and the spectral theory of permutation chains.  
In this paper, we resolve Fill's conjecture, prove a sharp lower bound on the spectral gap for arbitrary bias, and determine the full structure of the extremal regular chains.

Fill's problem may be viewed as complementary to Aldous's spectral gap conjecture, proved by Caputo, Liggett and Richthammer~\cite{CLR}.  
Whereas Aldous relates the spectral gap of a symmetric transposition chain to that of the associated one-particle random walk, Fill asks for the extremal behaviour of the spectral gap within a family of biased adjacent-transposition chains.  
The two problems are thus close in spirit but quite different in mechanism: in Fill's setting, the local bias destroys the symmetry available in the uniform case, and a different method is required.

\subsection{Biased adjacent-transposition chains}

Let $n\ge 2$, denote by $\Sn$ the symmetric group on $[n]:=\{1,2,\dots,n\}$, and write a permutation $x \in \mathfrak S_n$ of $[n]$ in one-line notation as
$x=(x_1,\dots,x_n)$. 
We consider the adjacent-transposition Markov chain $\mathcal M$ on $\mathfrak S_n$ weighted by given probabilities $p_{i,j}$ for all $i < j$, where $0 < p_{i,j} < 1$.
The probabilities $p_{j,i}$ are determined by the equation $p_{j,i} = 1 - p_{i,j}$.
These probabilities are collected together in the \emph{probability vector} $\mathbf p =(p_{i,j})_{1 \le i \ne j \le n}$.
From the current state $x$, choose
$r\in\{1,\dots,n-1\}$ uniformly. 
If the consecutive labels at positions $r$ and $r+1$ are $x_r$ and
$x_{r+1}$, respectively, then place $x_{r}$ immediately to the left of $x_{r+1}$ with probability $p_{x_{r},x_{r+1}}$ and place $x_{r+1}$ immediately to the left of $x_{r}$ with probability $p_{x_{r+1},x_{r}}$. 
The \emph{transition matrix} $K$ of the Markov chain $\mathcal M$ is defined as follows. 
For each $x \in \Sn$ and $r \in \{1,\dots,n-1\}$, if $y$ is the permutation obtained from $x$ by swapping the entries in positions $r$ and $r+1$ then
\begin{equation*}
K_{x,y}=\frac{p_{x_{r+1},x_r}}{n-1},
\qquad
K_{x,x}=\frac{1}{n-1}\sum_{r=1}^{n-1}p_{x_r,x_{r+1}},
\end{equation*}
and all other transition probabilities are zero.
For example, when $n = 3$, ordering the elements of $\mathfrak S_3$ as $(1,2,3),(2,1,3),(1,3,2),(2,3,1),(3,1,2),(3,2,1)$, the transition matrix $K$ is given by

\noindent\scalebox{0.94}{$\displaystyle 
K = \frac 1 2 \begin{bmatrix} 
p_{1,2}+p_{2,3} & p_{2,1} & p_{3,2} & 0 & 0 & 0 \\
p_{1,2} & p_{2,1}+p_{1,3} & 0& p_{3,1} & 0  & 0 \\
p_{2,3} & 0 & p_{1,3}+p_{3,2} & 0 & p_{3,1} & 0 \\
0 & p_{1,3} & 0  & p_{2,3}+p_{3,1} & 0 & p_{3,2} \\
0  & 0 & p_{1,3} & 0 & p_{3,1}+p_{1,2} & p_{2,1} \\
0 & 0 & 0 & p_{2,3} & p_{1,2} & p_{3,2}+p_{2,1} 
\end{bmatrix}.
$}

The probability vector $\mathbf p$ is called \emph{regular} if
\begin{align}
 p_{i-1,i}&\ge \tfrac 1 2, && 2\le i\le n, \label{eq:regular1}\\
 p_{i-1,j}&\ge p_{i,j}, && 2\le i<j\le n, \label{eq:regular2}\\
 p_{i,j+1}&\ge p_{i,j}, && 1\le i<j\le n-1. \label{eq:regular3}
\end{align}
We refer to the (regular) case when $p_{i,j} = 1/2$ for all $i \ne j$ as the \emph{uniform case}.
The \emph{spectral gap} $\lambda_K$ of $K$ is defined as the difference of the two largest eigenvalues of $K$, that is, $\lambda_K:=1-\beta_K$, where $\beta_K$ is the second-largest eigenvalue of $K$.



\subsection{Main results}

Our main theorem gives an explicit lower bound on the spectral gap for an arbitrary probability vector.  Given a probability vector $\mathbf p$ of order $n$, define
\[
m_{\mathbf p}:=\max_{1\le i<j<k\le n}\sqrt{p_{i,j}p_{j,k}p_{k,i}+p_{k,j}p_{j,i}p_{i,k}}.
\]

\begin{theorem}\label{thm:main}
For $n\ge 3$, we have
\[
\lambda_K\ge \frac{1-2m_{\mathbf p}\cos(\tfrac{\pi}{n})}{n-1}.
\]
\end{theorem}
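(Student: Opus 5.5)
The plan is to write $K$ as an average of orthogonal projections and then prove an operator inequality for such a family. For $r\in\{1,\dots,n-1\}$ let $K_r$ be the transition matrix of the move that acts only on positions $r,r+1$. From state $x$ it places $x_r$ before $x_{r+1}$ with probability $p_{x_r,x_{r+1}}$, independently of their current order. Hence $K_r^2=K_r$, and $K_r$ is reversible with respect to the stationary distribution $\pi$ of $K$. So $P_r:=K_r$ is an orthogonal projection on $L^2(\pi)$, and
\[
K=\frac1{n-1}\sum_{r=1}^{n-1}P_r,\qquad I-K=\frac1{n-1}\sum_{r=1}^{n-1}Q_r,\quad Q_r:=I-P_r .
\]
The theorem is therefore equivalent to $\sum_r Q_r\ge 1-2m_{\mathbf p}\cos(\pi/n)$ as quadratic forms on $\mathbf 1^{\perp}$, the $L^2(\pi)$-orthogonal complement of the constants. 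The common fixed space of the $P_r$ is exactly the constants, because the chain is irreducible.

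\textbf{Step 1: local structure.} When $|r-s|\ge2$ the moves touch disjoint positions, so $Q_r$ and $Q_s$ commute. For adjacent $r,r+1$ only three positions are involved. After conditioning on the labels in all other positions, the space splits into blocks indexed by the three labels $i<j<k$ occupying those positions, each block spanned by the six orderings of $\{i,j,k\}$. On each block I will compute the angle between the ranges of $Q_r$ and $Q_{r+1}$, each of which is rank three on the block. The aim is
\[
\|Q_rQ_{r+1}\|\le m_{\mathbf p}\quad\text{on }\mathbf 1^\perp,
\]
with the cosine on the $\{i,j,k\}$ block equal to $\sqrt{p_{i,j}p_{j,k}p_{k,i}+p_{k,j}p_{j,i}p_{i,k}}$. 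This is a finite $6\times 6$ computation. The expression is symmetric under reversing the cycle $i\to j\to k\to i$, which is what I expect from a product of two projections.

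\textbf{Step 2: an abstract projection inequality.} Suppose $Q_1,\dots,Q_{n-1}$ are orthogonal projections on a Hilbert space $H$ with the following properties: $Q_rQ_s=Q_sQ_r$ for $|r-s|\ge2$, $\|Q_rQ_{r+1}\|\le m$ on $H$, and $\bigcap_r\ker Q_r=0$. The claim to prove is $\sum_r Q_r\ge 1-2m\cos(\pi/n)$. The constant $1-2m\cos(\pi/n)$ is the smallest eigenvalue of the $(n-1)\times(n-1)$ tridiagonal matrix with $1$ on the diagonal and $-m$ off the diagonal, since the path on $n-1$ vertices has top eigenvalue $2\cos(\pi/n)$. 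So the proof should compare $\langle\sum_r Q_r f,f\rangle$ with a quadratic form in the numbers $t_r$, where $t_r$ measures the part of $f$ removed by $Q_r$.

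I would prove this by induction on the number of projections. Peel off $Q_{n-1}$ and use that it commutes with $Q_1,\dots,Q_{n-3}$. Then only the interaction with $Q_{n-2}$ is controlled by $m$. The resulting recursion for the optimal constants should be the Chebyshev-type three-term recursion whose solution involves $\cos(\pi/n)$, namely the one for the characteristic polynomials of the tridiagonal matrices. The base case $n=3$ is the classical two-projection fact: if $\|Q_1Q_2\|\le m$ and $\ker Q_1\cap\ker Q_2=0$, then $Q_1+Q_2\ge 1-m$, which equals $1-2m\cos(\pi/3)$.

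\textbf{Step 3: conclusion.} Apply Step 2 to $H=\mathbf 1^\perp$. This is legitimate because each $Q_r$ maps $H$ into itself: $P_r$ is self-adjoint and fixes constants. Then $I-K\ge (1-2m_{\mathbf p}\cos(\pi/n))/(n-1)$ on $\mathbf 1^\perp$, which gives the bound on $\lambda_K$.

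I expect the main obstacle to be Step 2. The inductive hypothesis has to be strengthened to something like a weighted inequality with boundary weights on the last projection, so that the recursion closes and produces exactly the path eigenvalue rather than a lossy constant. I would first try to encode each $f$ by a vector $(\|Q_rf\|)_r$, prove the quadratic-form inequality against the tridiagonal matrix directly using Cauchy--Schwarz on each cross term $\langle Q_rf,Q_{r+1}f\rangle$, and then fall back on the induction if this direct argument loses too much. A secondary difficulty is verifying the $6\times6$ angle computation in Step 1 with full accuracy.
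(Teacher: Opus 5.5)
\textbf{The gap is in Step 1.} The bound $\|Q_rQ_{r+1}\|\le m_{\mathbf p}$ on $\mathbf 1^\perp$ is false for every $\mathbf p$, because the ranges of $Q_r=I-\mathsf E_r$ and $Q_{r+1}$ always intersect.

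To see this, fix a block $B$ consisting of the six orderings of labels $\{i,j,k\}$ in positions $r,r+1,r+2$. Set $f(x)=\operatorname{sgn}(x)/\mu(x)$ for $x\in B$ and $f=0$ off $B$. Then $\mu(x)f(x)+\mu(x^{\tau_r})f(x^{\tau_r})=0$, and the same holds for $\tau_{r+1}$, so $Q_rf=Q_{r+1}f=f$. Also $\langle f,\mathbf 1\rangle=\sum_{x\in B}\operatorname{sgn}(x)=0$. Hence $\|Q_rQ_{r+1}\|=1$ on $\mathbf 1^\perp$; for $n=3$ in the uniform case the witness is just $f=\operatorname{sgn}$.

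What the $6\times 6$ computation actually gives is this. Write $m_{i,j,k}=\sqrt{p_{i,j}p_{j,k}p_{k,i}+p_{k,j}p_{j,i}p_{i,k}}$. On $B$, $\mathsf E_r+\mathsf E_{r+1}$ has spectrum $\{0,2,1\pm m_{i,j,k},1\pm m_{i,j,k}\}$. So $\operatorname{Im}Q_r\cap\operatorname{Im}Q_{r+1}$ is one-dimensional on $B$, and only the remaining principal angles have cosine $m_{i,j,k}$. Consequently the hypothesis of your abstract Step 2 is never met. Two-sided Cauchy--Schwarz on the cross terms yields only $|\langle Q_rf,Q_{r+1}f\rangle|\le\|Q_rf\|\|Q_{r+1}f\|$, and your base case collapses to $Q_1+Q_2\ge 0$.

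\textbf{The missing idea is that a one-sided bound suffices, and the bad direction has the favourable sign.} Write $f=f_1+f_2$ with $f_1\in W_r:=\operatorname{Im}Q_r\cap\operatorname{Im}Q_{r+1}$ and $f_2\in W_r^\perp$. Then $\langle Q_rf,Q_{r+1}f\rangle=\|f_1\|^2+\langle Q_rf_2,Q_{r+1}f_2\rangle\ge -m_{\mathbf p}\|Q_rf\|\|Q_{r+1}f\|$, since $\|Q_rQ_{r+1}\|\le m_{\mathbf p}$ on $W_r^\perp$. This is Lemma~\ref{lem:two-projections} applied to $Q_r,Q_{r+1}$. Its hypothesis holds because the smallest non-zero eigenvalue of $Q_r+Q_{r+1}=2I-(\mathsf E_r+\mathsf E_{r+1})$ is $1-m_{\mathbf p}$, by the spectrum above.

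Add $\langle Q_rf,Q_sf\rangle=\|Q_rQ_sf\|^2\ge 0$ for $|r-s|\ge 2$. Then your \emph{direct} option closes with no induction:
$\|\sum_rQ_rf\|^2\ge(1-2m_{\mathbf p}\cos(\pi/n))\sum_r\|Q_rf\|^2=(1-2m_{\mathbf p}\cos(\pi/n))\langle\sum_rQ_rf,f\rangle$.
So every positive eigenvalue of $\sum_rQ_r=(n-1)(I-K)$ is at least $1-2m_{\mathbf p}\cos(\pi/n)$. Positive definiteness of $\sum_rQ_r$ on $\mathbf 1^\perp$ then gives the theorem.

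\textbf{Comparison with the paper, once repaired.} The paper runs the same argument (Lemma~\ref{lem:A2-lower}) with the $\mathsf E_r$ themselves. That bounds the smallest positive eigenvalue of $K$, and the paper then needs Fill's similarity $K\sim I-K$ to turn it into a bound on $\lambda_K$. Working with $I-\mathsf E_r$ bounds the gap directly and avoids that theorem. The price is that you need the upper local eigenvalues $1+m_{i,j,k}$ as well as the lower ones recorded in Lemma~\ref{lem:local-three-point}.
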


In the regular case, $m_{\mathbf p}$ is bounded above by $1/2$; see~\cite[Evidence in favor of the conjecture (b)]{Fill}.  
Hence, Theorem~\ref{thm:main} immediately yields the following corollary.

\begin{corollary}\label{cor:reg}
If $\mathbf p$ is regular, then $\lambda_K\ge (1-\cos(\tfrac{\pi}{n}))/(n-1)$.
The case of equality implies $m_{\mathbf p}=1/2$.
\end{corollary}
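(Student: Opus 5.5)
The corollary follows from Theorem~\ref{thm:main} once we know $m_{\mathbf p}\le \tfrac12$ whenever $\mathbf p$ is regular. The plan is therefore (i) to establish, or cite from Fill, the inequality
\[
p_{i,j}p_{j,k}p_{k,i}+p_{k,j}p_{j,i}p_{i,k}\le \tfrac14
\qquad\text{for all } i<j<k,
\]
(ii) to substitute it into the theorem, and (iii) to read off the equality statement.

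For (i), write $a=p_{i,j}$, $b=p_{j,k}$, $c=p_{i,k}$, so that $p_{k,i}=1-c$, $p_{k,j}=1-b$ and $p_{j,i}=1-a$. Define
\[
f=ab(1-c)+(1-a)(1-b)c .
\]
Expanding gives
\[
f-\tfrac14=-(c-\tfrac12)(a+b-1)+(a-\tfrac12)(b-\tfrac12).
\]
The key point is that $f$ is affine in $c$, so on the relevant range of $c$ it is maximised at an endpoint. Regularity gives $c\ge\max(a,b)$: conditions \eqref{eq:regular2} and \eqref{eq:regular3}, iterated, yield $p_{i,k}\ge p_{j,k}$ and $p_{i,k}\ge p_{i,j}$. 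Regularity also gives $a,b\ge\tfrac12$, by iterating \eqref{eq:regular2} and \eqref{eq:regular3} from \eqref{eq:regular1}. With $a,b\ge \tfrac12$ we have $a+b-1\ge 0$, so $f$ is nonincreasing in $c$. Hence it suffices to take $c=\max(a,b)$; by symmetry let $c=a\ge b$. Then
\[
f-\tfrac14=-(a-\tfrac12)(a+b-1)+(a-\tfrac12)(b-\tfrac12)=-(a-\tfrac12)^2\le 0 .
\]
This is the only real computation, and it is the step where care with signs is needed. It is also exactly Fill's observation in \cite{Fill}, which we could simply cite instead.

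For (ii), since $\cos(\pi/n)>0$ for $n\ge3$, the right-hand side of Theorem~\ref{thm:main} is decreasing in $m_{\mathbf p}$. Substituting $m_{\mathbf p}\le\tfrac12$ gives $\lambda_K\ge(1-\cos(\pi/n))/(n-1)$.

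For (iii), suppose equality holds in the corollary. Then
\[
\frac{1-\cos(\pi/n)}{n-1}=\lambda_K\ge\frac{1-2m_{\mathbf p}\cos(\pi/n)}{n-1},
\]
which rearranges to $2m_{\mathbf p}\ge1$. Combined with $m_{\mathbf p}\le\tfrac12$, this forces $m_{\mathbf p}=\tfrac12$.

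The case $n=2$ lies outside the theorem's hypothesis. It is handled directly: $K$ is $2\times2$ with eigenvalues $1$ and $0$, so $\lambda_K=1=(1-\cos(\pi/2))/1$. Alternatively, the corollary is read as a statement for $n\ge3$. No step is expected to be a serious obstacle beyond the sign bookkeeping in (i).
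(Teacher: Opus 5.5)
Your proposal is correct and follows the paper's route: combine Theorem~\ref{thm:main} with $m_{\mathbf p}\le\tfrac12$ for regular $\mathbf p$, and for the equality case use $\cos(\pi/n)>0$ to get $2m_{\mathbf p}\ge 1$. The only difference is that the paper cites Fill for $m_{\mathbf p}\le\tfrac12$, whereas you derive it yourself from the identity $f-\tfrac14=-(c-\tfrac12)(a+b-1)+(a-\tfrac12)(b-\tfrac12)$ and monotonicity in $c$, a derivation that checks out.
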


Wilson~\cite{W} proved
that $\lambda_K=(1-\cos(\tfrac{\pi}{n}))/(n-1)$ in the uniform case.
This value was already implicit in earlier work of Bacher~\cite{B}, since in the uniform case the transition matrix is an affine transformation of the adjacency matrix of the permutahedron, whose second-largest eigenvalue he determined.
Thus, Corollary~\ref{cor:reg} is sharp and hence verifies Fill's conjecture.

\begin{corollary}[Fill's spectral gap conjecture]\label{cor:fill}
Among all regular probability vectors, the spectral gap of the transition matrix attains its minimum in the uniform case.
\end{corollary}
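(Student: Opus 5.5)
The plan is to combine Corollary~\ref{cor:reg} with Wilson's computation of the uniform spectral gap. Nothing new has to be proved: the work lies in Theorem~\ref{thm:main} together with the bound $m_{\mathbf p}\le 1/2$ for regular $\mathbf p$ from \cite{Fill}, both of which I take as given.

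First, I would check that the uniform vector $p_{i,j}=1/2$ for all $i\ne j$ is itself regular. Condition \eqref{eq:regular1} reads $1/2\ge 1/2$, and conditions \eqref{eq:regular2} and \eqref{eq:regular3} hold with equality. So the uniform case lies in the family over which we minimise. By Wilson's theorem~\cite{W} (equivalently Bacher's computation for the permutahedron~\cite{B}), the transition matrix $K_{\mathrm{unif}}$ of the uniform case satisfies
\[
\lambda_{K_{\mathrm{unif}}}=\frac{1-\cos(\tfrac{\pi}{n})}{n-1}.
\]

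Second, for $n\ge 3$, Corollary~\ref{cor:reg} gives $\lambda_K\ge (1-\cos(\tfrac{\pi}{n}))/(n-1)=\lambda_{K_{\mathrm{unif}}}$ for every regular $\mathbf p$. Hence the infimum of $\lambda_K$ over regular $\mathbf p$ is attained, and it is attained at the uniform vector.

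Third, I would treat $n=2$ separately, since Theorem~\ref{thm:main} assumes $n\ge3$. Here
\[
K=\begin{bmatrix} p_{1,2} & p_{2,1}\\ p_{1,2} & p_{2,1}\end{bmatrix},
\]
which has rank one with eigenvalues $1$ and $0$. So $\lambda_K=1=1-\cos(\pi/2)$ for every $\mathbf p$, and the claim holds trivially.

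I expect no real obstacle beyond this small-$n$ bookkeeping. The only point that needs care is that ``attains its minimum in the uniform case'' means the uniform value equals the lower bound, and Wilson's formula supplies exactly that. Uniqueness of the minimiser is not claimed here; the paper characterises the extremal vectors separately.
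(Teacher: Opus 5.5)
Your proposal is correct and matches the paper's argument exactly: Corollary~\ref{cor:reg} gives the lower bound $(1-\cos(\tfrac{\pi}{n}))/(n-1)$ for every regular $\mathbf p$, and Wilson's computation shows the uniform (regular) vector attains it. Your explicit check of the degenerate case $n=2$, where $K$ has rank one and $\lambda_K=1$ for every $\mathbf p$, is a small piece of bookkeeping that the paper leaves implicit.
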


Among all regular probability vectors, the minimum spectral gap is attained not only in the uniform case, but by a specific family of probability vectors, which we characterise in the next theorem.

\begin{theorem}\label{thm:Fillc1}
Let $n\ge 3$ and let $\mathbf p$ be a regular probability vector.  
Then
\[
\lambda_K=\frac{1-\cos(\tfrac{\pi}{n})}{n-1}
\]
if and only if there exists $i\in[n]$ such that $p_{i,j}=1/2$ for all $j\ne i$.
\end{theorem}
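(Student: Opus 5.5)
We prove the two directions separately, using Corollary~\ref{cor:reg} for the lower bound and building explicit eigenvectors for the upper bound.

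\emph{Sufficiency.} Suppose $p_{i,j}=1/2$ for all $j\ne i$ and some fixed label $i$. We construct an eigenfunction of $K$ with eigenvalue $1-\frac{1-\cos(\pi/n)}{n-1}$ that depends only on the position of label $i$. Set $f(x)=g(\mathrm{pos}_x(i))$. A step at position $r$ moves $i$ only if one of positions $r,r+1$ holds $i$. In that case the swap happens with probability $1/2$, whatever the neighbouring label is. Hence $\mathrm{pos}(i)$ is itself a Markov chain: a lazy simple random walk on the path $[n]$ that steps left or right each with probability $\frac{1}{2(n-1)}$. Consequently $(Kf)(x)=(Pg)(\mathrm{pos}_x(i))$, where $P$ is this path walk. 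Taking $g(k)=\cos\bigl(\pi(k-\tfrac12)/n\bigr)$ gives the eigenvalue
\[
1-\frac{1}{n-1}+\frac{\cos(\pi/n)}{n-1}.
\]
Since this $f$ is nonconstant, $\beta_K\ge 1-\frac{1-\cos(\pi/n)}{n-1}$. Together with Corollary~\ref{cor:reg} this gives equality.

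\emph{Necessity.} Suppose equality holds. By Corollary~\ref{cor:reg}, $m_{\mathbf p}=1/2$, so some triple $i<j<k$ satisfies
\[
p_{i,j}p_{j,k}p_{k,i}+p_{k,j}p_{j,i}p_{i,k}=\tfrac14.
\]
The next step is to pin down when this happens for regular $\mathbf p$. Write $a=p_{i,j}$, $b=p_{j,k}$, $c=p_{i,k}$. Regularity gives $a,b\ge 1/2$ and $c\ge\max(a,b)$ by monotonicity along rows and columns. The quantity becomes
\[
ab(1-c)+(1-a)(1-b)c=\tfrac14 - \bigl(\text{a nonnegative expression}\bigr),
\]
and we expect it to vanish only when $c=1/2$, which then forces $a=b=1/2$. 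For example, $c=1/2$ gives $\tfrac12(ab+(1-a)(1-b))$, and this equals $\tfrac14$ iff $(2a-1)(2b-1)=0$. So the first task is a careful case analysis of this boundary set. We may find that equality in $m_{\mathbf p}=1/2$ alone is not enough to reach the claimed structure. For instance, $p_{i,k}=1/2$ with only $p_{i,j}=1/2$ already gives equality.

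This is the main obstacle: $m_{\mathbf p}=1/2$ is only a consequence of equality, not a characterisation of it. The plan is therefore to revisit the proof of Theorem~\ref{thm:main}, namely the decomposition of $K$ into elementary orthogonal projections and the resulting operator inequality. We would track exactly when the inequality is tight. Tightness should force every local three-label term entering an extremal eigenvector to attain $1/2$, and the extremal eigenvector to be of the one-label position type constructed above. From the eigenvector equation we would then read off that the label $i$ whose position carries the eigenvector has $p_{i,j}=1/2$ for all $j$.

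Concretely, we would project a hypothetical eigenvector for $\beta_K$ onto the subspace of functions of a single label's position. We would then show that the projected walk of label $i$ must be unbiased on every edge. The reason is that any $p_{i,j}>1/2$ strictly lowers the Rayleigh quotient compared with the cosine profile on the path. Combining this with regularity, which makes the $p$'s monotone, reduces the strictness argument to a single comparison.
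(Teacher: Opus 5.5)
Your sufficiency direction is correct and is the paper's Lemma~\ref{lem:assume_1/2}. For $i\in H_{\mathbf p}$ the position of label $i$ performs a lazy walk on the path, so $\mathfrak f_i(x)=\phi(x^{-1}(i))$ is an eigenvector with eigenvalue $1-\frac{1-\cos(\pi/n)}{n-1}$, and Corollary~\ref{cor:reg} gives equality.

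\textbf{The gap is in necessity.} You rightly see that $m_{\mathbf p}=1/2$ alone is not enough and that the equality case of the projection argument must be used. But you give only a heuristic, and the mechanism you sketch would not work, for three reasons.
\begin{itemize}
\item The space of functions of one label's position is $K$-invariant only if the probability that $i$ swaps with a neighbour is independent of the neighbour's label. For $1<i<n$ and regular $\mathbf p$ this already means $i\in H_{\mathbf p}$, so the ``projected walk'' presupposes the conclusion.
\item Showing that some test functions have a smaller Rayleigh quotient when some $p_{i,j}>1/2$ only bounds $\beta_K$ from below. Ruling out equality needs control of \emph{every} $f\perp\mathbf 1$.
\item Extremal eigenvectors need not be of one-label type. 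When $|H_{\mathbf p}|=n-2$ and $p_{1,n}\neq 1/2$, the eigenvector $\mathfrak g$ of Lemma~\ref{lem:g} depends jointly on the positions and relative order of labels $1$ and $n$.
\end{itemize}
Your local computation is also off. For regular $\mathbf p$ write $p_{i,j}=\frac12+\alpha$, $p_{j,k}=\frac12+\beta$, $p_{i,k}=\frac12+\gamma$ with $\gamma\ge\max(\alpha,\beta)\ge0$. Then $\frac14-m_{i,j,k}^2=\gamma(\alpha+\beta)-\alpha\beta$, which vanishes exactly when $p_{i,j}=p_{j,k}=\frac12$, whatever $p_{i,k}$ is, not only when $p_{i,k}=\frac12$.

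\textbf{The missing idea is the equality analysis of Lemma~\ref{lem:tech}.} It works with the elementary projections rather than with single-label marginals.
\begin{itemize}
\item Take an eigenvector $f$ for the smallest positive eigenvalue of $K$ (equivalently, via $K\sim I-K$, for $\beta_K$). Equality in Lemma~\ref{lem:A2-lower} gives $\mathsf E_1\mathsf E_sf=0$ for $s\ge3$, $\mathsf E_1\mathsf E_2\mathsf E_1f=\frac14\mathsf E_1f$, and $\|\mathsf E_1f\|>0$.
\item The first relation gives the propagation rule \eqref{eqn:prop}. Hence $\mathsf E_1f$ is controlled by its values at the $z_{i,j}$, and $S_f\neq\emptyset$.
\item The second relation, read on $y_{i,j,k},y_{i,k,j},y_{j,k,i}$, forces $p_{i,j}=p_{j,k}=\frac12$ for every triple meeting $S_f$. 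It also reduces to \eqref{eqn:reduced}, whose coefficients are all positive.
\item Positivity shows that the lexicographically least element of $S_f$ is some $(1,b)$.
\item Applying the triple condition to $\{1,a,b\}$ for all $a\notin\{1,b\}$ gives $b\in H_{\mathbf p}$ when $b<n$, and $[2,n-1]\subseteq H_{\mathbf p}$ when $b=n$. The latter is non-empty since $n\ge3$.
\end{itemize}
Your proposal has no substitute for this propagation step, so the necessity direction remains unproved.
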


Fill~\cite[Stronger conjectures (c)]{Fill} also conjectured a precise formula for the multiplicity of the second-largest eigenvalue in the extremal case.
Our next result proves this stronger conjecture.

Denote by $\nu_{\mathbf p}$ the number of indices $i\in[n]$ for which $p_{i,j}=1/2$ for all $j\ne i$, and denote by $\mathrm{mult}_K(\theta)$ the multiplicity of $\theta$ as an eigenvalue of $K$.  

\begin{theorem}\label{thm:Fillc2}
Let $n\ge 3$ and let $\mathbf p$ be a regular probability vector.  
Suppose that $\lambda_K=(1-\cos(\tfrac{\pi}{n}))/(n-1)$.
Then
\[
\operatorname{mult}_K(1-\lambda_K)=
\begin{cases}
\nu_{\mathbf p}, & \nu_{\mathbf p}\notin\{n,n-2\},\\
n-1, & \nu_{\mathbf p}\in\{n,n-2\}.
\end{cases}
\]
\end{theorem}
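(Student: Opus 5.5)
We will prove Theorem~\ref{thm:Fillc2} by combining the structural information behind Theorem~\ref{thm:main} with an explicit construction of eigenvectors. By Theorem~\ref{thm:Fillc1}, the hypothesis forces $\nu_{\mathbf p}\ge 1$. The plan has two parts. First, we show that $\operatorname{mult}_K(\beta_K)\ge$ the claimed value by exhibiting enough independent eigenvectors. Second, we show the matching upper bound by analysing the equality case in the proof of Theorem~\ref{thm:main}.

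\textbf{Lower bound.} For each index $i$ with $p_{i,j}=1/2$ for all $j\neq i$, the label $i$ moves as a symmetric random walk on positions $\{1,\dots,n\}$ with holding. The position of $i$ is a lumpable function of the chain. Hence $f_i(x)=\cos\bigl(\pi(\mathrm{pos}_x(i)-\tfrac12)/n\bigr)$ is an eigenfunction of $K$ with eigenvalue $1-(1-\cos(\pi/n))/(n-1)$. This is the standard path-Laplacian eigenvector lifted to $\Sn$. These give $\nu_{\mathbf p}$ eigenvectors.
\begin{itemize}
\item If $\nu_{\mathbf p}<n$, they are linearly independent, since the positions of distinct labels are "independent enough" as functions. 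We will check this via a triangularity argument on permutations placing the labels at the extremes.
\item If $\nu_{\mathbf p}=n$ (the uniform case), the $f_i$ satisfy the single relation $\sum_i f_i=\sum_{r}\cos(\pi(r-\tfrac12)/n)=0$. This gives $n-1$, consistent with the representation theory of the standard representation (Bacher/Wilson).
\item If $\nu_{\mathbf p}=n-2$, we must find one extra eigenvector. Regularity together with all other labels being neutral should force the two non-neutral labels to be adjacent ones, say $a<b$ with only $p_{a,b}\ne 1/2$. We then take the analogous cosine function of the position of the pair, or of a suitable combination $\sum_{i}f_i$ over all labels, which is still an eigenvector because swapping $a,b$ does not change the multiset of positions. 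This gives $n-1$ eigenvectors.
\end{itemize}

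\textbf{Upper bound.} We return to the decomposition underlying Theorem~\ref{thm:main}: $K$ is written via elementary orthogonal projections (in the $\pi$-weighted inner product), and the bound comes from a chain of inequalities. Equality $\beta_K=1-\lambda_K$ forces any eigenvector $g$ for $\beta_K$ to achieve equality in each step. Equality in each local triple inequality $p_{i,j}p_{j,k}p_{k,i}+p_{k,j}p_{j,i}p_{i,k}\le m^2\le 1/4$ should force $g$ to lie in the span of components attached to triples with $m$-value exactly $1/2$. From this we aim to deduce that $g$ is a function of positions of the neutral labels only, i.e.\ lies in the span of the $f_i$ (plus the extra vector when $\nu_{\mathbf p}=n-2$). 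The key computation is a dimension count of the $\beta$-eigenspace of the reduced "position chain" for the neutral labels. This is a walk on injective placements of $\nu$ labels, whose relevant eigenspace is the span of the one-label cosine modes; we obtain that span via a comparison with the uniform case.

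\textbf{Main obstacle.} The hardest step is the upper bound: extracting from equality in Theorem~\ref{thm:main} that eigenvectors depend only on neutral labels' positions. A particularly delicate case is exactly when $\nu_{\mathbf p}\in\{n-2,n\}$, where extra coincidences occur. Our first attempt will be to characterise equality in the projection decomposition triple by triple, and then use connectivity of adjacent transpositions to propagate the local constraints globally.
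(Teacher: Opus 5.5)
\textbf{The case $\nu_{\mathbf p}=n-2$ of your lower bound fails.} The rest of your lower bound is essentially fine: the lumped cosine eigenvectors $\mathfrak f_i$, their independence when $\nu_{\mathbf p}<n$ (e.g.\ by interchanging a neutral label with a non-neutral one), and the single relation $\sum_i\mathfrak f_i=0$ when $\nu_{\mathbf p}=n$.

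Regularity does not make the two non-neutral labels adjacent; it forces them to be $1$ and $n$. Suppose they are $a<b$. Then $p_{a,b}\ge p_{a,a+1}\ge\tfrac12$ and $p_{a,b}\neq\tfrac12$. So $a\ge 2$ would give $\tfrac12=p_{a-1,b}\ge p_{a,b}>\tfrac12$ by \eqref{eq:regular2}, and $b\le n-1$ is excluded the same way by \eqref{eq:regular3} (compare Lemma~\ref{lem:interval}).

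More seriously, your proposed extra eigenvector gives nothing new. We have $\sum_{i=1}^n\mathfrak f_i\equiv 0$ because $\phi(n+1-r)=-\phi(r)$. The symmetric pair mode $\mathfrak f_1+\mathfrak f_n$ equals $-\sum_{i\in H_{\mathbf p}}\mathfrak f_i$. So your count stalls at $n-2$.

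The missing vector must break the symmetry between labels $1$ and $n$ while absorbing their bias. The paper uses $\mathfrak g(x)=\varepsilon(x)\bigl(\phi(x^{-1}(1))-\phi(x^{-1}(n))\bigr)$, with $\varepsilon(x)=1$ if $1$ precedes $n$ in $x$ and $\varepsilon(x)=p_{1,n}/p_{n,1}$ otherwise. This weight makes $\mathsf E_r\mathfrak g(x)=0$ whenever $1$ and $n$ occupy positions $r,r+1$. After that, the computation of $K\mathfrak g$ runs as in the uniform case (Lemma~\ref{lem:g}). Moreover, $\mathfrak g$ is outside the span of the $\mathfrak f_i$. Interchanging labels $1$ and $n$ in a permutation where $1$ precedes $n$ multiplies $\mathfrak g$ by $-p_{1,n}/p_{n,1}\neq 1$, but fixes every $\mathfrak f_i$ with $i\in H_{\mathbf p}$.

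\textbf{Your upper bound is only a plan, and its ingredients do not fit.} First, equality in the projection bound behind Theorem~\ref{thm:main} is attained by eigenvectors of $K$ for its smallest positive eigenvalue $\lambda_K$, not by $\beta_K$-eigenvectors. For the latter, $\langle f,K^2f\rangle=\beta_K^2\|f\|^2>\lambda_K\langle f,Kf\rangle$. So you must bound $\dim\ker(K-\lambda_K I)$ and transfer the count via Fill's similarity $K\sim I-K$. These eigenvectors are not in the span of the $\mathfrak f_i$; in the uniform case they are $\operatorname{sgn}(x)\mathfrak f_i(x)$.

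Second, for $\nu_{\mathbf p}=n-2$ the vector $\mathfrak g$ is not a function of the neutral labels' positions, so a reduced position chain cannot capture the whole eigenspace. Third, settling that chain's eigenspace ``by comparison with the uniform case'' presupposes the uniform multiplicity is exactly $n-1$. That is the case $\nu_{\mathbf p}=n$ of the statement itself (Bacher and Wilson give only the lower bound), so the argument is circular.

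Above all, nothing in your plan turns the equality conditions into a dimension bound. The paper does this with an injective linear map. For $f\in\ker(K-\lambda_K I)$, equality in Lemma~\ref{lem:A2-lower} with $m_{\mathbf p}=\tfrac12$ gives $\mathsf E_1\mathsf E_s f=0$ for $s\ge 3$ and $\mathsf E_1\mathsf E_2\mathsf E_1 f=\tfrac14\mathsf E_1 f$.
\begin{itemize}
\item The first identity determines $\mathsf E_1 f$ from its values at the permutations $z_{i,j}$ via \eqref{eqn:prop}.
\item The second yields, for each triple meeting the support, the positive-coefficient relation \eqref{eqn:reduced}.
\end{itemize}
With regularity, these show that the lexicographically least $(i,j)$ with $\mathsf E_1 f(z_{i,j})\neq 0$ is $(1,b)$. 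Here either $b\in H_{\mathbf p}$, or $b=n$ and then $[2,n-1]\subseteq H_{\mathbf p}$ (Lemma~\ref{lem:tech}). Hence $f\mapsto(\mathsf E_1 f(z_{1,j}))_j$ is injective on $\ker(K-\lambda_K I)$, with $j$ ranging over $H_{\mathbf p}$ when $\nu_{\mathbf p}\notin\{n,n-2\}$ and over $\{2,\dots,n\}$ otherwise. This is exactly the missing upper bound.
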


Together, Theorem~\ref{thm:Fillc1} and Theorem~\ref{thm:Fillc2} give a precise description of the regular adjacent-transposition Markov chains that are extremal for the spectral gap.  
In particular, among such extremal chains, the multiplicity of the second-largest eigenvalue measures the extent to which the chain retains features of the uniform case. 
Furthermore, Theorem~\ref{thm:Fillc2} sharpens both \cite[Théorème]{B} and \cite[Theorem 14]{W}, which only gave a lower bound on the multiplicity of the eigenvalue $1-\lambda_K$ in the uniform case.

\begin{remark}
\label{rem:permutahedron}
    The Cayley graph $\Gamma$ on $\mathfrak S_n$ generated by adjacent transpositions is also known as the permutahedron.
    By Theorem~\ref{thm:Fillc2}, the multiplicity of the second-largest eigenvalue of $\Gamma$ is $n-1$.
    On the other hand, this eigenvalue already occurs on the standard representation of $\mathfrak S_n$, whose dimension is $n-1$~\cite{B}.  
It follows that the entire second eigenspace of $\Gamma$ is afforded by the standard representation. 
In particular, no other irreducible representation of $\mathfrak S_n$ attains the second-largest eigenvalue of the permutahedron.
    The problem of determining which representations of $\mathfrak S_n$ attain the second-largest eigenvalue of Cayley graphs $\Gamma$ is a well-studied problem and the cases non-normal Cayley graphs are notoriously difficult~\cite{YL}.
\end{remark}


\subsection{Proof strategy}
\label{sec:proof-strategy}

The central structural ingredient in our proof is an algebraic decomposition of the transition matrix. 
A key result of Fill~\cite[Theorem~2]{Fill} establishes that the matrices $K$ and $I-K$ are similar, reducing the analysis of the spectral gap $\lambda_K$ to bounding the smallest positive eigenvalue of $K$.
To achieve this, we decompose the transition matrix as
$$K=\frac{1}{n-1}\sum_{r=1}^{n-1}\mathsf{E}_r,$$
where each $\mathsf{E}_r$ is an \emph{elementary transition matrix} acting strictly on the adjacent pair of positions $(r,r+1)$. 
We demonstrate that under a natural weighted inner product on $\mathbb{R}^{\mathfrak{S}_n}$ governed by the stationary measure of $\mathcal M$, each $\mathsf{E}_r$ acts as an orthogonal projection. 
This geometric viewpoint allows us to extract a quantitative lower bound on the angle between the vectors $\mathsf{E}_r f$ and $\mathsf{E}_{r+1}f$ from the smallest positive eigenvalue of the local sum $\mathsf{E}_r+\mathsf{E}_{r+1}$. 
Since elementary transition matrices with indices differing by at least two commute, these local bounds can be assembled to produce a global lower bound from the smallest positive eigenvalue of $K$, resulting in the proof of Theorem~\ref{thm:main}.

The same framework also controls the case of equality.  
The global equality for the smallest positive eigenvalue of $K$ forces a sequence of local relations among the elementary transition matrices, and these relations are strong enough to recover the precise structure asserted in Theorem~\ref{thm:Fillc1}.  
A further analysis of the corresponding eigenspaces then leads to Theorem~\ref{thm:Fillc2}.
Thus, the projection method not only bounds the spectral gap, but also characterises the extremal probability vectors.

\subsection{Consequences and outlook}

Theorem~\ref{thm:Fillc1} shows that the lower bound in Corollary~\ref{cor:reg} is attained only on a subclass of regular probability vectors, namely those for which $\nu_{\mathbf p}\ge 1$.  
Consequently, if $\nu_{\mathbf p}=0$, then we obtain a strict lower bound on the spectral gap of $K$,
\[
\lambda_K>\frac{1-\cos(\pi/n)}{n-1}.
\]

By Corollary~\ref{cor:reg}, if $\mathbf p$ is a regular probability vector then 
\[
\lambda_K \ge \frac{1-\cos(\pi/n)}{n-1}\ge \frac 2 {n^3}.
\]
It follows at once that $1/\lambda_K=O(n^3)$
for every regular probability vector.  
This conclusion substantially enlarges the range of biased adjacent-transposition chains for which polynomial relaxation is known.  
Earlier polynomial mixing or relaxation bounds were established for special subclasses of biased permutation chains~\cite{BMRS,HW,MS,MSS}, and more recently for the general adjacent-transposition chain under the stronger assumption that $p_{ij}>1/2+\varepsilon$ for all $i<j$~\cite{GLV}. 

Biased adjacent-transposition chains arise in models of self-organising lists~\cite{H,HH}, card shuffling~\cite{LL}, online algorithms~\cite{C}, ranking via the Bradley--Terry model~\cite{BT}, and approximation of Vandermonde permanents~\cite{BSV}; see also~\cite{Fill}.

As shown in Theorem~\ref{thm:Fillc2} and Remark~\ref{rem:permutahedron}, in addition to dealing with bias, our techniques allow us to strengthen results in the uniform case of Bacher~\cite{B} and Wilson~\cite{W} on the multiplicity of the second-largest eigenvalue of the Cayley graph $\Gamma$ on $\mathfrak S_n$ generated by adjacent transpositions.
This suggests that the projection framework developed here may be useful in other spectral problems for transposition-generated chains on $\mathfrak S_n$.

\subsection{Acknowledgement}

We are grateful to James Allen Fill for his insightful comments on an earlier draft of this paper.

\section{Transitions and projections}

We will fix $n$ and the probability vector $\mathbf p$ throughout.
Consider the space $\mathbb{R}^{\mathfrak{S}_n}$ of real vectors with coordinates indexed by permutations.
We will move interchangeably between the language of matrices and operators on $\mathbb{R}^{\mathfrak{S}_n}$.
To ease the notation, we denote the $x$-coordinate of a vector $f \in \mathbb{R}^{\mathfrak{S}_n}$ by $f(x)$.

As observed by Fill \cite{Fill}, the Markov chain $\mathcal M$ is reversible with respect to the probability measure
\begin{equation*} 
\mu(x)=Z^{-1}\prod_{1\le u<v\le n}p_{x_u,x_v},
\end{equation*}
where $Z$ is the normalising constant. 
Equip $\mathbb{R}^{\mathfrak{S}_n}$ with the weighted inner product $\langle \cdot, \cdot \rangle$ and induced norm $\|\cdot \|$ defined by
\[
\langle f, {g} \rangle
:=  \sum_{x \in \mathfrak{S}_n} {f}(x)\, {g}(x)\, \mu(x),
\qquad
\|f\| := \sqrt{\langle f, {f} \rangle}.
\]
We choose this inner product so that the elementary transition matrices defined below act as orthogonal projections.

\subsection{Elementary transition matrices}

Let $r \in \{1, \dots, n-1\}$.
For $x = (x_1, \dots, x_n) \in \mathfrak{S}_n$, write
\[
x^{\tau_r} := (x_1, \dots, x_{r-1}, x_{r+1}, x_r, x_{r+2}, \dots, x_n)
\]
for the permutation obtained by swapping the entries in positions $r$ and $r+1$.

Define the matrix $\mathsf E_r$ by
\[
(\mathsf E_r)_{x,y} = \begin{cases}
    p_{x_r, x_{r+1}}, & \text{if } y = x; \\
    p_{x_{r+1}, x_r}, & \text{if } y = x^{\tau_r}; \\
    0, & \text{otherwise.}
\end{cases}
\]
We call $\mathsf E_r$ the $r$-th \emph{elementary transition matrix}.
Note that 
\begin{equation}
\label{eqn:Eraction}
    \mathsf E_r f(x) = p_{x_r, x_{r+1}} f(x) + p_{x_{r+1}, x_r} f(x^{\tau_r}).
\end{equation}

Our first result shows that elementary transition matrices commute when their indices differ by at least $2$.

\begin{lemma}\label{lem:far-commute}
Let $r,s\in \{1,\dots, n-1\}$.
Suppose $|r-s| \ge 2$.
Then $\mathsf E_r\mathsf E_s=\mathsf E_s\mathsf E_r$. 
\end{lemma}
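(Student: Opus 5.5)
We prove the identity by applying both products to an arbitrary $f\in\mathbb{R}^{\mathfrak S_n}$ and expanding each side with the action formula \eqref{eqn:Eraction}. Since $|r-s|\ge 2$, the position pairs $\{r,r+1\}$ and $\{s,s+1\}$ are disjoint. Two consequences follow.
\begin{enumerate}
\item The swaps commute: $(x^{\tau_s})^{\tau_r}=(x^{\tau_r})^{\tau_s}$. Write $x^{\tau_r\tau_s}$ for this common permutation.
\item Applying $\tau_s$ does not change the entries in positions $r,r+1$, so $(x^{\tau_s})_r=x_r$ and $(x^{\tau_s})_{r+1}=x_{r+1}$. Symmetrically, $\tau_r$ leaves positions $s,s+1$ unchanged.
\end{enumerate}

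To lighten notation, set $a=p_{x_r,x_{r+1}}$, $\bar a=p_{x_{r+1},x_r}$, $b=p_{x_s,x_{s+1}}$ and $\bar b=p_{x_{s+1},x_s}$.

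First expand $\mathsf E_r\mathsf E_s f(x)=a\,(\mathsf E_s f)(x)+\bar a\,(\mathsf E_s f)(x^{\tau_r})$. We have $(\mathsf E_s f)(x)=b f(x)+\bar b f(x^{\tau_s})$. By consequence (2), the adjacent entries at positions $s,s+1$ of $x^{\tau_r}$ are still $x_s,x_{s+1}$, so
\[
(\mathsf E_s f)(x^{\tau_r})=b f(x^{\tau_r})+\bar b f(x^{\tau_r\tau_s}).
\]
Therefore
\[
\mathsf E_r\mathsf E_s f(x)=ab\,f(x)+a\bar b\,f(x^{\tau_s})+\bar a b\,f(x^{\tau_r})+\bar a\bar b\,f(x^{\tau_r\tau_s}).
\]

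The same computation with the roles of $r$ and $s$ exchanged gives
\[
\mathsf E_s\mathsf E_r f(x)=ba\,f(x)+b\bar a\,f(x^{\tau_r})+\bar b a\,f(x^{\tau_s})+\bar b\bar a\,f(x^{\tau_s\tau_r}).
\]
Here we again use consequence (2), now to evaluate the entries at positions $r,r+1$ of $x^{\tau_s}$. By consequence (1), $x^{\tau_s\tau_r}=x^{\tau_r\tau_s}$, so the two expansions agree term by term. Since $f$ and $x$ were arbitrary, $\mathsf E_r\mathsf E_s=\mathsf E_s\mathsf E_r$.

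The only step needing care is the bookkeeping in consequence (2): after one swap, the coefficient of the other operator is evaluated at the new permutation, and we must check that it is unchanged. This is exactly where the hypothesis $|r-s|\ge 2$ is used. When $|r-s|=1$ the two pairs share a position, the coefficients change after a swap, and the identity fails in general.
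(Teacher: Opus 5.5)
Your proof is correct and follows the same route as the paper. Both expand $\mathsf E_r\mathsf E_s f(x)$ via \eqref{eqn:Eraction} into four terms and use that $\tau_r$ and $\tau_s$ act on disjoint pairs of positions. Your consequence (2), that the coefficients of the second operator are unchanged after the first swap, spells out a step the paper's expansion uses only implicitly.
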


For the sake of completeness, we provide a short proof.

\begin{proof}
For each $f \in \mathbb R^{\Sn}$ and $x\in \mathfrak S_n$, applying \eqref{eqn:Eraction} yields
\[
\begin{aligned}
(\mathsf E_r\mathsf E_s f)(x)
&= p_{x_r,x_{r+1}}(\mathsf E_s f)(x) + p_{x_{r+1},x_r}(\mathsf E_s f)(x^{\tau_r}) \\
&=p_{x_r,x_{r+1}}p_{x_s,x_{s+1}}\,f(x)
 +p_{x_r,x_{r+1}}p_{x_{s+1},x_s}\,f(x^{\tau_s})\\
&\qquad
 +p_{x_{r+1},x_r}p_{x_s,x_{s+1}}\,f(x^{\tau_r})
 +p_{x_{r+1},x_r}p_{x_{s+1},x_s}\,f(x^{\tau_s \tau_r}).
\end{aligned}
\]
Since $|r-s|\ge 2$, the transpositions $\tau_r$ and $\tau_s$ commute since they
 act on disjoint pairs of positions and hence  $\mathsf E_r\mathsf E_s=\mathsf E_s\mathsf E_r$.
\end{proof}

Next, we record an expression for the smallest positive eigenvalue of the sum of a consecutive pair of elementary transition matrices, which can be deduced from~\cite[Section ``Evidence in favor of the conjecture (b)'']{Fill}.

\begin{lemma}\label{lem:local-three-point}
Let $r \in \{1,\dots, n-2\}$.
Then the smallest positive eigenvalue of $\mathsf E_r+\mathsf E_{r+1}$ is $1-m_{\mathbf p}$.
\end{lemma}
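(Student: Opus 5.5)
The plan is to reduce $\mathsf E_r+\mathsf E_{r+1}$ to a direct sum of small blocks, identify each block as a sum of two orthogonal projections, and read off its spectrum from principal angles.

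\textbf{Block decomposition.} The operators $\mathsf E_r$ and $\mathsf E_{r+1}$ only permute the entries in positions $r,r+1,r+2$. So $\mathbb R^{\mathfrak S_n}$ splits into invariant subspaces $V_{x}$. Each $V_x$ is spanned by the indicators of the $6$ permutations that agree with a fixed $x$ outside positions $r,r+1,r+2$. The labels in those positions form a set $\{i,j,k\}$ with $i<j<k$. Since $n\ge 3$, every triple $\{i,j,k\}\subseteq[n]$ occurs for some block. Hence it suffices to show that on a block with labels $\{i,j,k\}$ the positive eigenvalues of $\mathsf E_r+\mathsf E_{r+1}$ are $2$, $1$, $1\pm c_{ijk}$ and $1-c_{ijk}$'s partner, where $c_{ijk}=\sqrt{p_{i,j}p_{j,k}p_{k,i}+p_{k,j}p_{j,i}p_{i,k}}$. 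Note $c_{ijk}$ is symmetric in the triple, because relabelling permutes the two cyclic products. Taking the minimum over blocks then gives $1-\max c_{ijk}=1-m_{\mathbf p}$.

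\textbf{Projection structure.} First I would verify that each $\mathsf E_s$ is a $\mu$-self-adjoint idempotent.
\begin{itemize}
\item Idempotence: on each pair $\{x,x^{\tau_s}\}$ the matrix is $\begin{bmatrix}p_{a,b}&p_{b,a}\\ p_{a,b}&p_{b,a}\end{bmatrix}$, which is a rank-one idempotent because $p_{a,b}+p_{b,a}=1$.
\item Self-adjointness: this is detailed balance, $\mu(x)p_{b,a}=\mu(x^{\tau_s})p_{a,b}$. It follows from the product formula for $\mu$, since swapping adjacent $a,b$ changes only the factor $p_{a,b}\leftrightarrow p_{b,a}$.
\end{itemize}

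On a $6$-dimensional block, $P=\mathsf E_r$ and $Q=\mathsf E_{r+1}$ are therefore orthogonal projections of rank $3$. By the standard principal-angle (Jordan/Halmos) decomposition, the spectrum of $P+Q$ consists of:
\begin{itemize}
\item $2$ on $\operatorname{ran}P\cap\operatorname{ran}Q$;
\item $1\pm\cos\theta$ for each principal angle $\theta\in(0,\pi/2)$;
\item $1$ on $\operatorname{ran}P\cap\ker Q$ and $\ker P\cap\operatorname{ran}Q$;
\item $0$ on $\ker P\cap\ker Q$.
\end{itemize}
The range of $P$ consists of functions constant on each $\tau_r$-orbit. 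The range of $Q$ consists of functions constant on each $\tau_{r+1}$-orbit. These orbits generate all of $\mathfrak S_3$, so $\operatorname{ran}P\cap\operatorname{ran}Q$ is the constants, giving one principal angle $0$. The remaining two cosines are the square roots of the nonunit eigenvalues of $PQP|_{\operatorname{ran}P}$.

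\textbf{Computation.} Parametrise $\operatorname{ran}P$ by $g(\ell)$, where $\ell\in\{i,j,k\}$ is the label in position $r+2$; this uses that $f$ is constant on $\tau_r$-pairs. Applying $Q$ and then $P$ then gives an explicit $3\times 3$ stochastic matrix $M$ acting on $g$, with entries that are products of two $p$'s. This matrix has eigenvalue $1$ on constants. I would show the other two eigenvalues are $0$ and $c_{ijk}^2$ as follows.
\begin{itemize}
\item $\operatorname{tr}M=1+c_{ijk}^2$. This trace is also $\operatorname{tr}(PQ)$ on the block, which is a short sum over the six permutations.
\item $\det M=0$. The rows of $M$ should be combinations of only two independent vectors, since $QP$ passes through a space where $g$ and $\tau_{r+1}$ interact. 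Failing that, I would compute the $2\times 2$ minors directly.
\end{itemize}
With eigenvalues $1$, $0$ and $c_{ijk}^2$, the principal angles are $0$, $\pi/2$ and $\arccos c_{ijk}$. The spectrum of $P+Q$ on the block is then $\{2,\,1+c_{ijk},\,1-c_{ijk},\,1,\,1,\,0\}$. Since $c_{ijk}<1$, the smallest positive eigenvalue on the block is $1-c_{ijk}$. This holds even in the degenerate case $c_{ijk}=0$, where the smallest positive eigenvalue is $1$.

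The main obstacle is this explicit $3\times3$ computation, in particular confirming that the third eigenvalue is exactly $0$ rather than some other expression. That is what pins the smallest positive eigenvalue to $1-c_{ijk}$ and not something smaller. If the determinant computation is messy, I would instead exhibit directly a nonzero vector in $\ker P\cap\operatorname{ran}Q$ on the block, which forces the angle $\pi/2$.
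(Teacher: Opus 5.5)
\textbf{Verdict: correct route, but the key $3\times3$ computation is false.} Your framework is sound, and it would give a self-contained argument where the paper simply defers to Fill's explicit analysis of the case $n=3$. The $6$-dimensional blocks are invariant and $\mu$-orthogonal. On each block, $\mathsf E_r$ and $\mathsf E_{r+1}$ restrict to rank-$3$ orthogonal projections $P,Q$ whose ranges meet in the constants, and everything is governed by $M=PQP|_{\operatorname{ran}P}$. The gap is that your central computation of $M$ is wrong, so the step you flag as the main obstacle would fail.

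\textbf{What $M$ actually is.} Write $\ell$ for the label in position $r+2$ and $u,v$ for the other two labels of the triple. Then
\[
(Mg)(\ell)=\bigl(p_{u,v}p_{v,\ell}+p_{v,u}p_{u,\ell}\bigr)g(\ell)+p_{u,v}p_{\ell,v}\,g(v)+p_{v,u}p_{\ell,u}\,g(u).
\]
The off-diagonal entry in column $v$ is $w_v:=\prod_{t\neq v}p_{t,v}$ (product over the other two labels of the triple), which does not depend on the row. Expanding with $p_{x,y}+p_{y,x}=1$ shows the diagonal entry is $c_{ijk}^2+w_\ell$. Hence $M=c_{ijk}^2I+\mathbf 1w^\top$ with $w^\top\mathbf 1=1-c_{ijk}^2$, so the eigenvalues of $M$ are $1,c_{ijk}^2,c_{ijk}^2$.

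\textbf{Consequences for your steps.} $\operatorname{tr}M=1+2c_{ijk}^2$, not $1+c_{ijk}^2$, and $\det M=c_{ijk}^4>0$ for every $\mathbf p$. Note also that $c_{ijk}>0$ always, so your degenerate case never arises. Your fallback fails too, since $\ker P\cap\operatorname{ran}Q=\{0\}$ on every block. The uniform case makes this concrete. A block is then a $6$-cycle (the Cayley graph of $\mathfrak S_3$ on the two adjacent swaps), and $P+Q=I+\tfrac12A_{C_6}$ has spectrum $\{2,\tfrac32,\tfrac32,\tfrac12,\tfrac12,0\}$, with no eigenvalue $1$.

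\textbf{Why it matters.} As you observe, the third eigenvalue of $M$ is exactly what controls the answer: a value $\lambda\in(c_{ijk}^2,1)$ would create the smaller block eigenvalue $1-\sqrt{\lambda}$. Your conclusion is therefore currently supported by a false claim. It only happens that your claimed block spectrum $\{2,1\pm c_{ijk},1,1,0\}$ has the same smallest positive element as the true one.

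\textbf{How to repair it.} With the corrected $M$ the argument goes through:
\begin{itemize}
\item The principal angles are $0,\theta,\theta$ with $\cos\theta=c_{ijk}$.
\item The block spectrum is $\{2,1+c_{ijk},1+c_{ijk},1-c_{ijk},1-c_{ijk},0\}$.
\item Minimising over all triples (all of which occur since $n\ge3$) gives $1-m_{\mathbf p}$.
\item You also need $c_{ijk}<1$. This holds because $c_{ijk}^2=p_{i,j}\,p_{j,k}p_{k,i}+p_{j,i}\,p_{k,j}p_{i,k}$ is a convex combination of two numbers less than $1$.
\end{itemize}
This structure (a double eigenvalue $m_{i,j,k}^2$ of $\mathsf E_1\mathsf E_2\mathsf E_1$ on each block, plus a rank-one part) is exactly what underlies the determinant $(1-m_{\mathbf p}^2)(m_{i,j,k}^2-m_{\mathbf p}^2)^2$ in the proof of Lemma~\ref{lem:tech}.
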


Let $V$ be a subspace of $\mathbb{R}^{\mathfrak{S}_n}$.
A matrix $P$ is called an \emph{orthogonal projection} onto $V$ with respect to $\langle \cdot, \cdot \rangle$ if, for all $f \in \mathbb{R}^{\mathfrak{S}_n}$ and $g \in V$ we have $Pf \in V$, $Pg = g$, and $\langle f ,g \rangle = \langle Pf,g\rangle$.
It follows immediately that an orthogonal projection is idempotent.
Orthogonal projections must also be self-adjoint.
Indeed, let $f , g \in \mathbb{R}^{\mathfrak{S}_n}$.
One can orthogonally decompose $\mathbb{R}^{\mathfrak{S}_n} = V \oplus V^\perp$ so that $f = Pf + (f-Pf)$ and $g = Pg + (g-Pg)$.
Then $\langle Pf,g\rangle = \langle Pf,Pg\rangle + \langle Pf,g-Pg\rangle =  \langle Pf,Pg\rangle$ and similarly $\langle f,Pg\rangle = \langle Pf,Pg\rangle + \langle Pf-f,Pg\rangle =  \langle Pf,Pg\rangle$.

Define the subspace $V_r \subset \mathbb{R}^{\mathfrak{S}_n}$ by
\[
V_r := \{{f} \in \mathbb{R}^{\mathfrak{S}_n} :
{f}(x^{\tau_r}) = {f}(x)
\ \text{for all } x \in \mathfrak{S}_n\}.
\]

\begin{lemma}\label{lem:Qr-projection}
For each $1\le r\le n-1$, the $r$-th elementary transition matrix $\mathsf E_r$ is the orthogonal projection onto $V_r$ with respect to $\langle \cdot, \cdot \rangle$.
\end{lemma}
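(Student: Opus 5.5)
We check the three defining properties of an orthogonal projection directly from \eqref{eqn:Eraction}. The guiding observation is that $\mathsf E_r$ acts independently on each two-element orbit $\{x, x^{\tau_r}\}$. On such an orbit it is a rank-one stochastic $2\times 2$ matrix, and the weights $\mu(x),\mu(x^{\tau_r})$ are in the ratio that makes this matrix self-adjoint.

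First, we show $\mathsf E_r f\in V_r$ for every $f$. Write $a=x_r$ and $b=x_{r+1}$. Then $(x^{\tau_r})_r=b$ and $(x^{\tau_r})_{r+1}=a$, so
\[
\mathsf E_r f(x^{\tau_r}) = p_{b,a} f(x^{\tau_r}) + p_{a,b} f(x).
\]
This coincides with $\mathsf E_r f(x)=p_{a,b}f(x)+p_{b,a}f(x^{\tau_r})$. Second, if $g\in V_r$, then $\mathsf E_r g(x)=(p_{a,b}+p_{b,a})g(x)=g(x)$, using $p_{a,b}+p_{b,a}=1$.

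The remaining condition is $\langle f,g\rangle=\langle \mathsf E_r f,g\rangle$ for $g\in V_r$. The main step here is the stationary-measure identity
\[
\mu(x)\,p_{b,a}=\mu(x^{\tau_r})\,p_{a,b}.
\]
To see it, note that $x$ and $x^{\tau_r}$ have the same set of ordered pairs (in terms of relative position) except for the pair formed by the labels $a,b$. In $x$ the label $a$ precedes $b$, contributing the factor $p_{a,b}$ to the product defining $\mu$. In $x^{\tau_r}$ the order is reversed, contributing $p_{b,a}$ instead. All other factors $p_{x_u,x_v}$ are unchanged, since for any label $c\notin\{a,b\}$ its position relative to $a$ and to $b$ is preserved. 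Therefore $\mu(x)/p_{a,b}=\mu(x^{\tau_r})/p_{b,a}$, which is the identity above.

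With this identity in hand, we compute $\langle \mathsf E_r f, g\rangle$ for $g\in V_r$:
\[
\langle \mathsf E_r f,g\rangle=\sum_x \mu(x)\bigl(p_{x_r,x_{r+1}}f(x)+p_{x_{r+1},x_r}f(x^{\tau_r})\bigr)g(x).
\]
In the second term we reindex by $y=x^{\tau_r}$, which is a bijection of $\mathfrak S_n$, and use $g(x)=g(y)$. By the identity, $\mu(x)p_{x_{r+1},x_r}=\mu(y)p_{y_{r+1},y_r}$, so the second term becomes $\sum_y \mu(y)\,p_{y_{r+1},y_r}f(y)g(y)$. Adding the first term and using $p_{a,b}+p_{b,a}=1$ once more gives $\sum_x\mu(x)f(x)g(x)=\langle f,g\rangle$.

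The only nonroutine point is the measure identity, which needs the careful observation that the adjacent swap changes exactly one factor of $\mu$. The rest is bookkeeping.
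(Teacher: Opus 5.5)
Your proof is correct and follows essentially the same route as the paper: both check the three defining properties directly, with the detailed-balance identity $\mu(x)\,p_{x_{r+1},x_r}=\mu(x^{\tau_r})\,p_{x_r,x_{r+1}}$ as the key input. The only cosmetic difference is in how the pieces are arranged. The paper rewrites $\mathsf E_r f(x)$ as the $\mu$-weighted average of $f$ over $\{x,x^{\tau_r}\}$ and obtains orthogonality orbit by orbit, while you verify $\mathsf E_r f\in V_r$ straight from \eqref{eqn:Eraction} and get $\langle \mathsf E_r f,g\rangle=\langle f,g\rangle$ by reindexing the sum.
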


\begin{proof}
Let $f \in \mathbb R^{\Sn}$ and $x\in \mathfrak S_n$. 
Since $\mu(x^{\tau_r})/\mu(x) = p_{x_{r+1},x_r}/p_{x_r,x_{r+1}}$
and $p_{x_r,x_{r+1}}+p_{x_{r+1},x_r}=1$, we have
\begin{equation}
\label{eqn:Ef}
    \mathsf{E}_r f(x)
= p_{x_r,x_{r+1}} f(x) + p_{x_{r+1},x_r} f(x^{\tau_r})
= \frac{\mu(x)\,f(x)+\mu(x^{\tau_r})\,f(x^{\tau_r})}{\mu(x)+\mu(x^{\tau_r})}.
\end{equation}
Since this expression is symmetric under interchanging $x$ and $x^{\tau_r}$, we have
$\mathsf{E}_r f(x^{\tau_r}) = \mathsf{E}_r f(x)$.
Therefore, $\mathsf{E}_r f \in V_r$.
Conversely, if $f \in V_r$ then $f(x^{\tau_r}) = f(x)$ for all $x$.
Hence, \eqref{eqn:Ef} yields $\mathsf{E}_r f(x) = f(x)$.
Thus $\mathsf{E}_r$ maps $\mathbb{R}^{\mathfrak{S}_n}$ onto $V_r$ and acts as the identity on $V_r$.

Observe that
\[
(f(x)-\mathsf E_r f(x)){\mu}(x)
+(f(x^{\tau_r})-\mathsf E_r f(x^{\tau_r})){\mu}(x^{\tau_r})=0.
\]
For all $g \in V_r$, since $g(x) = g(x^{\tau_r})$, we have
\[
(f(x)-\mathsf E_r f(x))g(x){\mu}(x)
+(f(x^{\tau_r})-\mathsf E_r f(x^{\tau_r}))g(x^{\tau_r}){\mu}(x^{\tau_r})=0,
\]
and, by partitioning $\mathfrak S_n$ into pairs of the form $\{x,x^{\tau_r}\}$, we find that
\[
\langle f-\mathsf E_r f,\,g\rangle=\sum_{x\in \mathfrak S_n} (f(x)-\mathsf E_r f(x))g(x){\mu}(x) = 0.
\]
Therefore, $f-\mathsf E_r f$ is orthogonal to $V_r$, that is, $\mathsf E_r$ is the orthogonal projection of $\mathbb{R}^{\mathfrak{S}_n}$ onto $V_r$.
\end{proof}

\subsection{A lemma on pairs of orthogonal projections}

The following lemma, which is a crucial ingredient in the proof of Lemma~\ref{lem:A2-lower}, holds for any finite-dimensional real inner product space.
It provides, for any pair of orthogonal projections $P$ and $Q$, an upper bound on the angle between the vectors $Pf$ and $Qf$ in terms of the smallest non-zero eigenvalue of $P+Q$.

The \emph{operator norm} $\| R \|$ induced by $\langle \cdot, \cdot \rangle$ is defined by
\[
\| R \| := \max_{\| f \|=1} \| Rf \| =  \max_{\| f \|=1} \sqrt{\langle Rf, Rf \rangle} =  \max_{\| f \|=1} \sqrt{\langle R^*Rf, f \rangle},
\]
where $R^*$ denotes the adjoint of $R$ with respect to $\langle \cdot, \cdot \rangle$.
It follows that $\| R \|$ is equal to the maximum singular value $\sigma$ of $R$, that is, the square root of the maximum eigenvalue of $R^*R$.

\begin{lemma}\label{lem:two-projections}
Let $P$ and $Q$ be orthogonal projections on a finite-dimensional real inner product space $V$.
Suppose that every non-zero eigenvalue of $P+Q$ is at least $1-p$, where $0\le p<1$.
Then, for every $f\in V$,
\[
\langle Pf,Qf\rangle\ge -p\,\|Pf\|\,\|Qf\|.
\]
In the case of equality, we have $f \!\in\! \bigl(\operatorname{Im}(P) \cap \operatorname{Im}(Q)\bigr)^\perp$, and when $Pf\neq 0\neq Qf$, one has $PQPf = p^2Pf$.
\end{lemma}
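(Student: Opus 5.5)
The plan is to reduce everything to the operator $T:=PQP$ restricted to $\operatorname{Im}(P)$, and to read off its spectrum from that of $P+Q$. We use the classical two-projection picture. If $u\in\operatorname{Im}(P)$ is an eigenvector of $PQP$ with eigenvalue $c\in(0,1)$, then the span of $u$ and $Qu$ is invariant under both $P$ and $Q$. On this span, $P+Q$ has eigenvalues $1\pm\sqrt c$. Here $\sqrt c$ is the cosine of the principal angle, and $\sqrt c=\langle u,Qu\rangle^{1/2}$ when $\|u\|=1$.

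Eigenvalues of $PQP$ equal to $1$ correspond to vectors in $\operatorname{Im}(P)\cap\operatorname{Im}(Q)$. Eigenvalues equal to $0$ correspond to $\operatorname{Im}(P)\cap\ker Q$. The hypothesis that every nonzero eigenvalue of $P+Q$ is at least $1-p$ then forces $1-\sqrt c\ge 1-p$, so every eigenvalue $c$ of $PQP|_{\operatorname{Im}P}$ lying in $(0,1)$ satisfies $c\le p^2$. Concretely, we diagonalise the self-adjoint $PQP$ on $\operatorname{Im}(P)$ and check directly that $(P+Q)(\alpha u+\beta Qu)=\lambda(\alpha u+\beta Qu)$ has solutions $\lambda=1\pm\sqrt c$. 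This step is a short two-dimensional computation.

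Next, decompose $f$ orthogonally as $f=f_0+f_1$, with $f_0\in W:=\operatorname{Im}(P)\cap\operatorname{Im}(Q)$ and $f_1\in W^\perp$. Both $P$ and $Q$ fix $W$ and, being self-adjoint, preserve $W^\perp$. Hence
\[
\langle Pf,Qf\rangle=\|f_0\|^2+\langle Pf_1,Qf_1\rangle,\qquad \|Pf\|^2=\|f_0\|^2+\|Pf_1\|^2,
\]
and similarly for $Q$. It therefore suffices to prove
\[
\langle Pf_1,Qf_1\rangle\ge -p\|Pf_1\|\|Qf_1\|,
\]
and then to note that adding $\|f_0\|^2\ge0$ on the left, and increasing the norms on the right, preserves the inequality. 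Equality then forces $f_0=0$ when $p>0$. If the right-hand side is zero, then equality means $\|f_0\|^2=0$ directly. This gives the first equality statement.

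The main step works on $W^\perp$, with $a=Pf_1$ and $b=Qf_1$. We have $\langle a,b\rangle=\langle a,Qf_1\rangle=\langle Qa,f_1\rangle$, which mixes things up. Instead, we write $\langle a,b\rangle=\langle a,f_1\rangle-\langle a,(I-Q)f_1\rangle$ and bound the terms via the principal angle structure. Cleaner is to use the canonical decomposition of $W^\perp$ into $P,Q$-invariant planes together with the pieces $\operatorname{Im}P\cap\ker Q$, $\ker P\cap\operatorname{Im}Q$ and $\ker P\cap\ker Q$. The four spaces $\operatorname{Im}P\cap\operatorname{Im}Q$, $\operatorname{Im}P\cap\ker Q$, $\ker P\cap\operatorname{Im}Q$ and $\ker P\cap\ker Q$ together with the generic part give the standard Halmos decomposition. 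On each plane with angle $\theta$ ($\cos\theta=\sqrt c\le p$), $P$ and $Q$ are rank-one projections onto lines at angle $\theta$. A direct computation gives $\langle Pg,Qg\rangle=\|Pg\|\|Qg\|\cos\theta\cdot\epsilon$ with $\epsilon=\pm1$, so $\langle Pg,Qg\rangle\ge -p\|Pg\|\|Qg\|$ on each piece. The degenerate pieces contribute $0$ to the inner product.

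Summing over pieces and using Cauchy–Schwarz, $\sum\|Pg_i\|\|Qg_i\|\le\|Pf_1\|\|Qf_1\|$, gives the bound. Equality requires every nonvanishing plane component to sit at angle exactly $p$, i.e. in the $c=p^2$ eigenspace of $PQP$, and with a sign of $-1$. Equality in Cauchy–Schwarz is also needed, but this only affects proportionality. It follows that $Pf$ lies in the $p^2$-eigenspace of $PQP$, so $PQPf=p^2Pf$ whenever $Pf\ne0\ne Qf$. I expect the main obstacle to be bookkeeping in the equality case. In particular, we must check that components in $\operatorname{Im}P\cap\ker Q$ are excluded: they increase $\|Pf\|$ without contributing to the inner product, which makes the inequality strict when $p>0$ and $Qf\ne0$.
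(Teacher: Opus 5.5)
Your proof is correct, but it takes a different route from the paper's.

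Both arguments rest on the same two-dimensional fact: if $u\in\operatorname{Im}P$ satisfies $PQPu=\sigma^2u$ with $0<\sigma<1$, then $P+Q$ has the nonzero eigenvalue $1-\sigma$ on $\operatorname{span}\{u,Qu\}$. The paper uses this fact only once, for a top singular vector $f$ of $R=PQ|_{W^\perp}$. Setting $g=\sigma^{-1}PQf$, it shows $Qf=f$, $Pg=g$ and $(P+Q)(g-f)=(1-\sigma)(g-f)$, and concludes $\|R\|\le p$. A single Cauchy--Schwarz step, $|\langle Pf,PQ\,Qf\rangle|\le\|R\|\,\|Pf\|\,\|Qf\|$, then gives the bound. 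Equality in that step yields $PQf=-p\frac{\|Qf\|}{\|Pf\|}Pf$ and its mirror image, which compose to $PQPf=p^2Pf$ with no case analysis.

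Your Halmos-decomposition approach buys the fuller principal-angle picture; for instance, it shows that $p=0$ forces $P$ and $Q$ to commute. It costs you two things. First, you rely on the canonical decomposition, which you should at least justify: an orthonormal eigenbasis of $PQP|_{\operatorname{Im}P}$ yields mutually orthogonal invariant planes that, together with the four intersection spaces, exhaust $V$. Second, the equality case becomes heavier.

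In the equality case, your Cauchy--Schwarz step does more than ``only affect proportionality''. Every generic plane contains vectors $g_i$ with $Pg_i\neq0=Qg_i$. Like $\operatorname{Im}P\cap\ker Q$, these contribute nothing to $\langle Pf,Qf\rangle$, and their plane need not have angle $p$. What rules them out is the proportionality of $(\|Pg_i\|)_i$ and $(\|Qg_i\|)_i$, which is valid once $p>0$ and $Pf\neq0\neq Qf$. Only then can you conclude that every plane carrying a nonzero $Pg_i$ has $\cos\theta_i=p$. For $p=0$ there are no generic planes at all, so $Pf\in\operatorname{Im}P\cap\ker Q$ and $PQPf=0$ is immediate.
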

\begin{proof}
We may assume that $P$ and $Q$ are both non-zero orthogonal projections, since otherwise the conclusion is immediate.
First, we orthogonally decompose the space as $V = W \oplus W^\perp$, where $W = \operatorname{Im}(P) \cap \operatorname{Im}(Q)$.
For any $f \in V$, we can write $f = f_1 + f_2$ where $f_1 \in W$ and $f_2 \in W^\perp$.
Since $W$ is invariant under $P$ and $Q$, its orthogonal complement $W^\perp$ is also invariant under both self-adjoint projections. 
Thus, $Pf_2, Qf_2 \in W^\perp$.
We have $Pf = f_1 + Pf_2$ and $Qf = f_1 + Qf_2$. 
Observe that $\langle f_1, Pf_2 \rangle = \langle Pf_1, f_2 \rangle = \langle f_1, f_2 \rangle = 0$ and similarly $\langle f_1, Qf_2 \rangle = 0$.
Thus,
\[
\langle Pf, Qf \rangle = \|f_1\|^2 + \langle Pf_2, Qf_2 \rangle.
\]
Similarly, $\|Pf\|^2 = \|f_1\|^2 + \|Pf_2\|^2$ and $\|Qf\|^2 = \|f_1\|^2 + \|Qf_2\|^2$.
Suppose we can establish our bound on $W^\perp$, namely $\langle Pf_2, Qf_2 \rangle \ge -p \|Pf_2\| \|Qf_2\|$. 
Since $p \ge 0$, we would have
\begin{equation}
    \label{eqn:f1}
    \langle Pf, Qf \rangle \ge \|f_1\|^2 - p \|Pf_2\| \|Qf_2\| \ge
-p \|Pf\| \|Qf\|.
\end{equation}
It therefore suffices to show that $\langle Pf, Qf \rangle \ge -p \|Pf\| \|Qf\|$ for $f \in W^\perp$.

Denote by $R$ the operator $PQ$ restricted to $W^\perp$, that is, $R = PQ|_{W^\perp}$.
For $f \in W^\perp$, using the fact that $P$ and $Q$ are idempotent and self-adjoint with respect to $\langle \cdot,\cdot\rangle$ together with Cauchy--Schwarz yields
\begin{equation}
\label{eqn:cs}
    |\langle Pf, Qf \rangle| = |\langle P^2f, Q^2 f \rangle| =  |\langle Pf, (PQ)Q f \rangle| \le \|R\| \|Pf\| \|Qf\|.
\end{equation}
It therefore suffices to show that the restricted operator norm satisfies $\|R\| \le p$.

Let $f \in W^\perp$ be a unit vector such that $\|R\| = \|Rf\| = \sigma >0$.
Now set $g = \sigma^{-1}PQf$.
Then $\| g \| = 1$ and $Pg = g$, since $g \in \text{Im}(P)$.
Since both $P$ and $Q$ are self-adjoint with respect to $\langle \cdot,\cdot\rangle$, the adjoint of $R$ is $R^* = QP|_{W^\perp}$.
Indeed, for all $u,v \in W^\perp$, we have $\langle Ru,v\rangle = \langle PQu,v\rangle = \langle u,QPv\rangle$.
Next, we have $Qg = QPg = \sigma^{-1}R^*Rf = \sigma f$.
Thus, $f \in \operatorname{Im}Q$ and, since $Q$ is idempotent, we have $Qf = f$.
Multiplying both sides by $P$ yields $Pf = PQf = \sigma g$.
Therefore,
\[
(P+Q)(g- f) = (g-\sigma g) + (\sigma f- f) = (1-\sigma)(g - f).
\]
 
If $g =  f$, then $P g =  g$ and $Q g =  g$, meaning $g \in \text{Im}(P) \cap \text{Im}(Q) = W$, which leads to a contradiction since $g \in W^\perp$ is a unit vector. 
Thus, $ g \neq  f$.

Since $g \neq  f$, the vector $g - f$ is a non-zero eigenvector of $P+Q$ with eigenvalue $1-\sigma$.
We claim that $1-\sigma$ is non-zero. 
Indeed, observe that
\[
\sigma = \langle g, \sigma g \rangle = \langle  g, P f \rangle = \langle P g,  f \rangle = \langle g,  f \rangle.
\]
By Cauchy--Schwarz, since $g$ and $f$ are distinct unit vectors, $\langle g,  f \rangle < 1$. 
Thus, $\sigma < 1$, and $1-\sigma$ is strictly positive.

By assumption, every non-zero eigenvalue of $P+Q$ is at least $1-p$. 
Since $1-\sigma$ is a non-zero eigenvalue of $P+Q$, we have $1-\sigma \ge 1-p$, which implies $\sigma \le p$.
Thus, $\|R\| \le p$, as required.

Now, consider the case of equality, that is, $\langle Pf,Qf\rangle = -p\,\|Pf\|\,\|Qf\|$.
From \eqref{eqn:f1}, we have
\[
\|f_1\|^2 =
p(\|Pf_2\| \|Qf_2\|- \|Pf\| \|Qf\|) \le 0.
\]
Hence, $f_1 = 0$ and $f \in W^\perp$.
Furthermore, we must also have $Qf , PQf \in W^\perp$.
Since $\langle Pf, PQf \rangle = \langle Pf, Qf \rangle$, equality in the Cauchy--Schwarz inequality \eqref{eqn:cs} yields
\begin{equation}
    \label{eqn:PQf}
    PQf = -p\frac{\|Qf\|}{\| Pf \|}Pf.
\end{equation}
Exchanging the roles of $P$ and $Q$, one also finds
\[
QPf = -p\frac{\|Pf\|}{\| Qf \|}Qf,
\]
which, when combined with \eqref{eqn:PQf}, yields $PQPf = p^2Pf$, as required.
\end{proof}

\section{Spectral gap and multiplicities}

\subsection{Lower bound on spectral gap}

By definition, we can express the transition matrix $K$ as the average of the elementary transition matrices, that is, 
$K = \frac{1}{n-1}\sum_{r=1}^{n-1} \mathsf E_r$.
We now prove a lower bound on $\langle f,K^2f\rangle$ and characterise equality in terms of elementary transition matrices.
This lemma plays a central role in the proof of our main results.

\begin{lemma}\label{lem:A2-lower}
Let $f\in \mathbb{R}^{\mathfrak{S}_n}$.
Then, for $n\ge 3$,
\begin{equation*}
\langle f,K^2f\rangle\ge \frac{1-2m_{\mathbf p}\cos(\tfrac{{\pi}}{n})}{n-1}\langle f, Kf\rangle.
\end{equation*}
In the case of equality, there exists a non-negative constant $c$ such that
\begin{align}
    \mathsf E_r \mathsf E_s f&= 0 \qquad\qquad\quad \ \ \;\text{ for $|r-s| \ge 2$;} \label{eqn:E1} \\
    \mathsf E_r \mathsf E_{r+1} \mathsf E_r f &= m_{\mathbf p}^2 \mathsf E_r f  \qquad\quad\ \text{ for each $r \in \{1,\dots, n-2\}$;} \label{eqn:E2}\\
    \| \mathsf E_r f\| &= c \sin (r \pi/n) \quad \ \  \text{ for each $r \in \{1,\dots, n-1\}$.} \label{eqn:E3}
\end{align}
\end{lemma}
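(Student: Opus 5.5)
Write $a_r:=\|\mathsf E_r f\|$ for $r=1,\dots,n-1$. Since each $\mathsf E_r$ is a self-adjoint idempotent (Lemma~\ref{lem:Qr-projection}), we have
\[
(n-1)\langle f,Kf\rangle=\sum_r \langle f,\mathsf E_r f\rangle=\sum_r a_r^2 .
\]
Expanding the square gives
\[
(n-1)^2\langle f,K^2f\rangle=\Bigl\|\sum_r \mathsf E_r f\Bigr\|^2=\sum_r a_r^2+2\sum_{r}\langle \mathsf E_rf,\mathsf E_{r+1}f\rangle+2\sum_{|r-s|\ge2}\langle \mathsf E_rf,\mathsf E_sf\rangle .
\]
I will handle the three groups of terms separately.

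First the far terms. For $|r-s|\ge 2$, Lemma~\ref{lem:far-commute} says $\mathsf E_r$ and $\mathsf E_s$ commute, so
\[
\langle \mathsf E_rf,\mathsf E_sf\rangle=\langle f,\mathsf E_r\mathsf E_sf\rangle=\langle f,\mathsf E_r\mathsf E_s\mathsf E_s\mathsf E_rf\rangle=\|\mathsf E_s\mathsf E_rf\|^2\ge 0 .
\]
Equality here forces $\mathsf E_r\mathsf E_sf=0$, which is \eqref{eqn:E1}.

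Next the adjacent terms. Lemma~\ref{lem:local-three-point} says the nonzero eigenvalues of $\mathsf E_r+\mathsf E_{r+1}$ are at least $1-m_{\mathbf p}$. I will assume $m_{\mathbf p}<1$, which holds since the $p$'s lie strictly between $0$ and $1$; otherwise the bound needs separate handling. Lemma~\ref{lem:two-projections} with $p=m_{\mathbf p}$ then gives
\[
\langle \mathsf E_rf,\mathsf E_{r+1}f\rangle\ge -m_{\mathbf p}\,a_ra_{r+1}.
\]
Combining the three groups,
\[
(n-1)^2\langle f,K^2f\rangle\ge \sum_{r=1}^{n-1}a_r^2-2m_{\mathbf p}\sum_{r=1}^{n-2}a_ra_{r+1}=a^{T}(I-m_{\mathbf p}A)a,
\]
where $A$ is the adjacency matrix of the path on $n-1$ vertices. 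The largest eigenvalue of $A$ is $2\cos(\pi/n)$, with Perron eigenvector $(\sin(r\pi/n))_r$, and $a\ge0$. The Rayleigh quotient therefore gives
\[
a^T(I-m_{\mathbf p}A)a\ge (1-2m_{\mathbf p}\cos(\pi/n))\,\|a\|^2 .
\]
Dividing by $(n-1)^2$ and using $\|a\|^2=(n-1)\langle f,Kf\rangle$ yields the stated inequality.

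For the equality case, every inequality in the chain must be tight. The Rayleigh quotient step forces $a$ to be a top eigenvector of $A$. That eigenvalue is simple, so $a=c(\sin(r\pi/n))_r$, and $c\ge0$ because $a\ge0$; this is \eqref{eqn:E3}. Tightness of the far terms gives \eqref{eqn:E1}. Tightness of each adjacent term is the equality case of Lemma~\ref{lem:two-projections}. If $c>0$, then all $a_r>0$, so $\mathsf E_rf\ne0\ne\mathsf E_{r+1}f$, and that lemma gives $\mathsf E_r\mathsf E_{r+1}\mathsf E_rf=m_{\mathbf p}^2\mathsf E_rf$. If $c=0$, then every $\mathsf E_rf=0$ and \eqref{eqn:E2} holds trivially.

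The main obstacle I expect is the sign issue in the equality analysis when $1-2m_{\mathbf p}\cos(\pi/n)$ could be nonpositive, and more generally making sure the equality conditions pass cleanly through the Rayleigh step. The Rayleigh bound itself is valid regardless of sign, so only uniqueness of the maximiser matters, and the simplicity of the Perron eigenvalue supplies it. Everything else is a routine combination of Lemmas~\ref{lem:far-commute}, \ref{lem:local-three-point}, \ref{lem:Qr-projection} and~\ref{lem:two-projections}.
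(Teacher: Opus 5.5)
Your proposal is correct and follows the paper's proof step for step. It uses the same split of $(n-1)^2\langle f,K^2f\rangle$ into diagonal, adjacent and far terms, and the same nonnegativity of the far terms via commuting projections. It applies Lemma~\ref{lem:two-projections} to the adjacent terms and bounds the tridiagonal form by its smallest eigenvalue $1-2m_{\mathbf p}\cos(\pi/n)$; your $I-m_{\mathbf p}A$ is the paper's $T$ up to the factor $(n-1)^{-2}$. The termwise equality analysis is also the same, and your explicit check that $c>0$ gives $\mathsf E_rf\neq 0\neq \mathsf E_{r+1}f$ before invoking the equality clause of Lemma~\ref{lem:two-projections} makes precise a point the paper leaves implicit.
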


\begin{proof}
Since $K=\frac{1}{n-1}\sum_{r=1}^{n-1}\mathsf E_r$ and each $\mathsf E_r$ is self-adjoint, expanding the inner product yields
\begin{align*}
\langle f,K^2 f\rangle
&=\frac{1}{(n-1)^2}\left (\sum_{r=1}^{n-1} \|\mathsf E_r f\|^2
+2\sum_{r=1}^{n-2}\langle \mathsf E_r f,\mathsf E_{r+1}f\rangle
+2\sum_{\substack{1\le r<s\le n-1\\ |r-s|\ge 2}}
\langle \mathsf E_r f,\mathsf E_s f\rangle \right ).
\end{align*}

By Lemma~\ref{lem:far-commute}, if $|r-s|\ge 2$, then since $\mathsf E_r$ and $\mathsf E_s$ are commuting orthogonal projections with respect to $\langle \cdot,\cdot\rangle$, it follows that the matrix $\mathsf E_r\mathsf E_s$ is idempotent and self-adjoint with respect to $\langle \cdot,\cdot\rangle$.
Hence,
\[
\langle \mathsf E_r f,\mathsf E_s f\rangle
=\langle f,\mathsf E_r\mathsf E_s f\rangle
=\|\mathsf E_r\mathsf E_s f\|^2
\ge 0.
\]

By Lemma~\ref{lem:local-three-point}, every non-zero eigenvalue of
$\mathsf E_r+\mathsf E_{r+1}$ is at least $1-m_{\mathbf p}$. 
Applying
Lemma~\ref{lem:two-projections} with $P=\mathsf E_r$ and
$Q=\mathsf E_{r+1}$ and $p=m_{\mathbf p}$, we obtain, for $1\le r\le n-2$,
\[
\langle \mathsf E_r f,\mathsf E_{r+1}f\rangle
\ge -m_{\mathbf p}\,\|\mathsf E_r f\| \|\mathsf E_{r+1} f\|.
\]
Therefore,
\begin{equation}
    \label{eqn:rhs}
    \langle f,K^2 f\rangle
\ge
\frac{1}{(n-1)^2}\left (\sum_{r=1}^{n-1} \|\mathsf E_r f\|^2
-2m_{\mathbf p}\sum_{r=1}^{n-2} \|\mathsf E_r f\|\|\mathsf E_{r+1} f\| \right ).
\end{equation}

Set $
v_r:=\|\mathsf E_r f\|$ for each $r\in \{1,\dots,n-1\}$, and write $\mathbf v=(v_1,\dots,v_{n-1})^\top\in\mathbb R^{n-1}$. 
Then the right-hand side of \eqref{eqn:rhs} equals $\mathbf v^\top T\mathbf v$,
where
\[
T:=
\frac{1}{(n-1)^2}\begin{bmatrix}
1&-m_{\mathbf p}&&&\\
-m_{\mathbf p}&1&-m_{\mathbf p}&&\\
&\ddots&\ddots&\ddots&\\
&&-m_{\mathbf p}&1&-m_{\mathbf p}\\
&&&-m_{\mathbf p}&1
\end{bmatrix}.
\]
The smallest eigenvalue of $T$ is $(1-2m_{\mathbf p}\cos(\tfrac \pi n))/(n-1)^2$ (see~\cite[Section 1.4.4]{spectra}). 
Hence
\[
\mathbf v^\top T\mathbf v
\ge
\frac{1-2m_{\mathbf p}\cos(\tfrac {\pi} n)}{(n-1)^2}\sum_{r=1}^{n-1} \|\mathsf E_r f\|^2.
\]

Finally, since each $\mathsf E_r$ is an orthogonal projection,
\[
\sum_{r=1}^{n-1} \|\mathsf E_r f\|^2
=\sum_{r=1}^{n-1}\langle f,\mathsf E_r f\rangle
=(n-1)\langle f,K f\rangle.
\]
This proves the inequality.

In the case of equality, using \eqref{eqn:rhs}, we find that
\begin{align}
\label{eqn:eq1}
    \sum_{\substack{1\le r<s\le n-1\\ |r-s|\ge 2}}
\|\mathsf E_r\mathsf E_s f\|^2 &= 0; \\
\label{eqn:eq2}
\sum_{r=1}^{n-2}
\Bigl(
m_{\mathbf p}\|\mathsf E_r f\|\,\|\mathsf E_{r+1}f\|
+
\langle \mathsf E_r f,\mathsf E_{r+1}f\rangle
\Bigr) &= 0; \\
\label{eqn:eq3}
\left(
\mathbf v^{\!\top}T\mathbf v
-
\frac{1-2m_{\mathbf p}\cos(\pi/n)}{(n-1)^2}
\sum_{r=1}^{n-1} v_r^2
\right) &= 0.
\end{align}
The first equation, \eqref{eqn:eq1}, implies~\eqref{eqn:E1}.
The last equation, \eqref{eqn:eq3}, implies that either $\mathbf v=0$, in which case \eqref{eqn:E2} and \eqref{eqn:E3} are trivial, 
or else $\mathbf v$ is an eigenvector of $T$ corresponding to its smallest eigenvalue.
Since this eigenspace is spanned by $(\sin(r\pi/n))_{r=1}^{n-1}$, we have \eqref{eqn:E3}. 
Finally, by Lemma~\ref{lem:two-projections}, the second equation \eqref{eqn:eq2} implies~\eqref{eqn:E2}. 
\end{proof}

Taking $f$ to be an eigenvector for a non-zero eigenvalue of $K$ in Lemma~\ref{lem:A2-lower} yields the following corollary, which implies Theorem~\ref{thm:main} using the similarity of $K$ and $I-K$~\cite[Theorem~2]{Fill}.

\begin{corollary}\label{cor:main}
Each positive eigenvalue of $K$ is at least $(1-2m_{\mathbf p}\cos(\tfrac{\pi}{n}))/(n-1)$.
\end{corollary}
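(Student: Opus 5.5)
Take $f$ to be an eigenvector of $K$ with a positive eigenvalue $\theta$, and apply Lemma~\ref{lem:A2-lower} to it.

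First I check that $K$ is self-adjoint with respect to $\langle\cdot,\cdot\rangle$. This holds because $K=\frac{1}{n-1}\sum_r\mathsf E_r$ and each $\mathsf E_r$ is an orthogonal projection by Lemma~\ref{lem:Qr-projection}. Consequently the eigenvalues of $K$ are real, and eigenvectors can be chosen real. It also shows that $K$ is positive semidefinite, since $\langle f,Kf\rangle=\frac{1}{n-1}\sum_r\|\mathsf E_rf\|^2\ge 0$. So all eigenvalues lie in $[0,1]$, and it makes sense to speak of the positive ones.

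Now fix a positive eigenvalue $\theta$ of $K$ and a non-zero real vector $f$ with $Kf=\theta f$. Then
\[
\langle f,K^2f\rangle=\theta^2\|f\|^2,\qquad \langle f,Kf\rangle=\theta\|f\|^2 .
\]
Substituting these into Lemma~\ref{lem:A2-lower} gives
\[
\theta^2\|f\|^2\ge \frac{1-2m_{\mathbf p}\cos(\pi/n)}{n-1}\,\theta\|f\|^2 .
\]
Since $\theta>0$ and $\|f\|>0$, dividing by $\theta\|f\|^2$ yields
\[
\theta\ge \frac{1-2m_{\mathbf p}\cos(\pi/n)}{n-1},
\]
which is the claim.

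The only point needing care is the positivity of $\theta\|f\|^2$, which is what justifies the division. This is guaranteed because $\theta$ was chosen positive and $\|\cdot\|$ is a genuine norm, as $\mu(x)>0$ for every $x$ (all $p_{i,j}\in(0,1)$).
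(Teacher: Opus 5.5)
Your proposal is correct and matches the paper's argument: apply Lemma~\ref{lem:A2-lower} to a real eigenvector $f$ for a positive eigenvalue $\theta$ and divide by $\theta\|f\|^2>0$. Your extra remarks on self-adjointness, realness of eigenvectors and positivity of $\mu$ supply the routine justifications the paper leaves implicit.
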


\subsection{Probability vectors with minimum spectral gap}

In this section, we consider the case of equality in Corollary~\ref{cor:main}.

Given a probability vector $\mathbf p$, define the set
\[
H_{\mathbf p} = \{ i \in [n] \;:\; p_{i,j} = 1/2 \text{ for all } j \ne i\}.
\]
As we shall see below, regular probability vectors $\mathbf p$ that achieve the minimum spectral gap share the property that the set $H_{\mathbf p}$ is non-empty.

\begin{lemma}\label{lem:interval}
Let $\mathbf p$ be a regular probability vector.
    Then the set $H_{\mathbf p}$ is an interval. 
    Furthermore, if $H_{\mathbf p}=[1,b]$ for some $b<n$, or $H_{\mathbf p}=[a,n]$ for some $a>1$, then $\mathbf p$ is uniform. 
\end{lemma}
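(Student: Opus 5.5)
The plan is to derive everything from monotonicity consequences of the regularity conditions \eqref{eq:regular1}--\eqref{eq:regular3}.

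The first step is to show that $p_{i,j}\ge \tfrac12$ for all $i<j$. Fix $j$ and apply \eqref{eq:regular2} repeatedly to get the chain
\[
p_{i,j}\ge p_{i+1,j}\ge\cdots\ge p_{j-1,j},
\]
and then \eqref{eq:regular1} gives $p_{j-1,j}\ge \tfrac12$. The same two conditions say that, for $i<j$, the quantity $p_{i,j}$ is non-increasing as $i$ increases with $j$ fixed. Condition \eqref{eq:regular3} says it is non-decreasing as $j$ increases with $i$ fixed. We also note that $k\in H_{\mathbf p}$ is equivalent to $p_{k,j}=p_{j,k}=\tfrac12$ for all $j\ne k$, since $p_{j,k}=1-p_{k,j}$.

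For the interval property, take $a<c<b$ with $a,b\in H_{\mathbf p}$, and show $c\in H_{\mathbf p}$ by checking every $j\ne c$.
\begin{itemize}
\item If $j>c$, then $a<c<j$. The first monotonicity gives $\tfrac12=p_{a,j}\ge p_{c,j}\ge\tfrac12$, so $p_{c,j}=\tfrac12$.
\item If $j<c$, then $j<c<b$. The second monotonicity gives $\tfrac12\le p_{j,c}\le p_{j,b}=\tfrac12$. Hence $p_{j,c}=\tfrac12$, and so $p_{c,j}=\tfrac12$.
\end{itemize}
Thus $c\in H_{\mathbf p}$.

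For the second assertion, we show the stronger statement that $1\in H_{\mathbf p}$ or $n\in H_{\mathbf p}$ already forces $\mathbf p$ to be uniform.
\begin{itemize}
\item If $1\in H_{\mathbf p}$, then for any $i<j$ we have $\tfrac12\le p_{i,j}\le p_{1,j}=\tfrac12$.
\item If $n\in H_{\mathbf p}$, then for any $i<j$ we have $\tfrac12\le p_{i,j}\le p_{i,n}=\tfrac12$.
\end{itemize}
Either case gives $p_{i,j}=\tfrac12$ for all $i\ne j$. In particular, $H_{\mathbf p}=[n]$, so the hypotheses $b<n$ or $a>1$ can never actually occur for a regular $\mathbf p$.

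No step is a real obstacle. The only care needed is to check that each regularity inequality is applied with indices in the range where it is stated, namely $i<j$, and to convert between $p_{i,j}$ and $p_{j,i}$ correctly.
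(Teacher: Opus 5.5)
Your proof is correct and, for the interval property, is essentially the paper's argument: the same squeeze between $\tfrac12$ and a value fixed by a member of $H_{\mathbf p}$, using \eqref{eq:regular1}--\eqref{eq:regular3}. For the second assertion you anchor the squeeze at $1$ (resp.\ $n$) instead of at $b$ (resp.\ $a$) as the paper does. This is a little shorter, and it correctly shows that $1\in H_{\mathbf p}$ or $n\in H_{\mathbf p}$ alone already forces uniformity, so the hypotheses $b<n$ and $a>1$ can never actually hold.
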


\begin{proof}
    Let $i<j<k$ and suppose that $i,k\in H_{\mathbf p}$. Fix $\ell>j$. Since $i\in H_{\mathbf p}$, we have $p_{i,\ell}=1/2$.
By repeated use of \eqref{eq:regular2}, we have $p_{j,\ell}\le p_{i,\ell}=1/2$.
On the other hand, repeated use of \eqref{eq:regular3} and then \eqref{eq:regular1} gives $p_{j,\ell}\ge p_{j,j+1}\ge  1/2$.
Hence $p_{j,\ell}=1/2$ for every $\ell>j$.

Now fix $\ell<j$.
Since $k\in H_{\mathbf p}$, we have $p_{\ell,k}=1/2$.
By repeated use of \eqref{eq:regular3}, $p_{\ell,j}\le p_{\ell,k}=1/2$, whereas repeated use of \eqref{eq:regular2} and then \eqref{eq:regular1} gives $p_{\ell,j}\ge p_{j-1,j}\ge 1/2$.
Thus, $p_{\ell,j}=1/2$ for every $\ell<j$, and hence $j\in H_{\mathbf p}$.
Therefore, $H_{\mathbf p}$ is an interval.

For the second statement, we prove only the first case, the second being analogous. Suppose that $H_{\mathbf p}=[1,b]$ with $b<n$. Let $b<u<v\le n$.
Since $b\in H_{\mathbf p}$, we have $p_{b,v}=1/2$.
Repeated use of \eqref{eq:regular2} gives $p_{u,v}\le p_{b,v}=1/2$, while repeated use of \eqref{eq:regular3} and then \eqref{eq:regular1} gives $p_{u,v}\ge p_{u,u+1}\ge 1/2$.
Hence $p_{u,v}=1/2$ for all $b<u<v\le n$ and it follows that $\mathbf p$ is uniform.
\end{proof}

Define $z_{i,j} = (i,j,\xi_3,\dots,\xi_n) \in \mathfrak S_n$ to be the permutation such that $i$ is in the first position, $j$ is in the second position and $\xi_3, \dots, \xi_n$ are in increasing order.
Define the set $S_f$ by 
\[
S_f := \{(i,j) \; : \; 1 \le i < j \le n \text{ and } \mathsf E_1 f(z_{i,j}) \ne 0 \}.
\]

\begin{lemma}\label{lem:tech}
    Let $n\ge 3$.
    Suppose that the smallest positive eigenvalue of $K$ is $(1-2m_{\mathbf p}\cos(\tfrac{{\pi}}{n}))/(n-1)$ with corresponding eigenvector $f$.
    Then $S_f$ is non-empty and
    \[
    \sqrt{p_{i,j}p_{j,k}p_{k,i} + p_{k,j}p_{j,i}p_{i,k}} = m_{\mathbf p}
    \]
    for all $i<j<k$ such that $\{(i,j),(i,k),(j,k)\} \cap S_f \ne \emptyset$.
    Furthermore, if $\mathbf p$ is regular and $m_{\mathbf p}=1/2$, then 
    the lexicographically minimal element of $S_f$ has the form $(1,b)$ and, if $b = n$ then $[2,n-1] \subseteq H_{\mathbf p}$, otherwise $b \in H_{\mathbf p}$.
\end{lemma}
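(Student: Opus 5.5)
Since $f$ attains equality in Corollary~\ref{cor:main}, the Rayleigh-type inequality of Lemma~\ref{lem:A2-lower} is an equality for $f$. Indeed, $Kf=\theta f$ with $\theta>0$, so $\langle f,K^2f\rangle=\theta\langle f,Kf\rangle$. Hence \eqref{eqn:E1}--\eqref{eqn:E3} hold with some $c\ge 0$. We have $c>0$: otherwise $\mathsf E_rf=0$ for all $r$, so $Kf=0$, contradicting $\theta>0$.

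For $S_f\ne\emptyset$: by \eqref{eqn:E3}, $\|\mathsf E_1f\|=c\sin(\pi/n)>0$, so $\mathsf E_1f(x)\neq0$ for some $x$. We then want to move $x$ to some $z_{i,j}$. Put $g=\mathsf E_1f$, so $g\in V_1$. By \eqref{eqn:E1}, $\mathsf E_s g=0$ for all $s\ge3$. Lemma~\ref{lem:Qr-projection} and \eqref{eqn:Ef} show that $\mathsf E_sg=0$ means $\mu(x)g(x)+\mu(x^{\tau_s})g(x^{\tau_s})=0$. Thus $g(x^{\tau_s})=-(\mu(x)/\mu(x^{\tau_s}))g(x)$ for $s\ge 3$. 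Since adjacent transpositions at positions $3,\dots,n-1$ generate all rearrangements of $x_3,\dots,x_n$, $g$ is determined up to a nonzero factor on each coset by its value at $z_{x_1,x_2}$. The symmetry $g\in V_1$ lets us assume $x_1<x_2$, so $(x_1,x_2)\in S_f$.

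For the triple statement, I would use \eqref{eqn:E2} with $r=1$, i.e.\ $\mathsf E_1\mathsf E_2 g=m_{\mathbf p}^2 g$. I would compute $\mathsf E_1\mathsf E_2\mathsf E_1$ on the six permutations whose first three entries are a rearrangement of $\{i,j,k\}$ with the rest fixed. This block is exactly the $n=3$ local computation behind Lemma~\ref{lem:local-three-point}. On functions in $V_1$, $\mathsf E_1\mathsf E_2\mathsf E_1$ acts on the three-dimensional space indexed by the unordered pair occupying positions $1,2$. Its eigenvalues should be $1$ (constants) and $p_{i,j}p_{j,k}p_{k,i}+p_{k,j}p_{j,i}p_{i,k}$ (on the remaining directions, via the determinant/trace of this $3\times3$ block). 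The sign structure from $\mathsf E_s g=0$, $s\ge3$, means this block decouples from the positions $\ge4$. The constant direction is excluded by orthogonality to $\operatorname{Im}\mathsf E_1\cap\operatorname{Im}\mathsf E_2$, again from Lemma~\ref{lem:two-projections}. Hence $m_{\mathbf p}^2$ must equal the triple's value whenever $g$ is nonzero on that block, i.e.\ whenever one of $(i,j),(i,k),(j,k)$ lies in $S_f$. The main obstacle is making this block decomposition rigorous when positions $\ge 4$ carry nontrivial signed weights. I would first try to show the relevant restriction of $g$ factors as (a function of the first three entries) times (a fixed alternating weight on the tail).

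For the regular case with $m_{\mathbf p}=1/2$, let $(a,b)$ be the lexicographic minimum of $S_f$. If $a>1$, then the triple $1<a<b$ involves $(a,b)\in S_f$, so its value is $1/4$. I would use the regularity inequalities to show this value equals $1/4$ only when enough of the $p$'s equal $1/2$; this is the evidence (b) computation in \cite{Fill}. This should force pairs $(1,a)$ or $(1,b)$ into $S_f$ through the three-dimensional eigenvector, contradicting minimality. So $a=1$. Next, for each $k\ne 1,b$, the triple formed from $\{1,b,k\}$ attains $1/4$. The equality analysis of Fill's bound, namely $p_{b,k}=1/2$ plus the monotonicity \eqref{eq:regular1}--\eqref{eq:regular3}, then gives $b\in H_{\mathbf p}$ when $b<n$. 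When $b=n$, one gets $p_{k,\ell}=1/2$ for the middle indices, and Lemma~\ref{lem:interval} yields $[2,n-1]\subseteq H_{\mathbf p}$.
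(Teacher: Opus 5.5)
Your strategy is the paper's. You use equality in Lemma~\ref{lem:A2-lower} with $c>0$. You propagate $g=\mathsf E_1 f$ through positions $\ge 3$ via $\mathsf E_s g=0$ to get $S_f\neq\emptyset$. You then use the $3\times 3$ block of $\mathsf E_1\mathsf E_2$ on $V_1$, whose eigenvalues are $1,m_{i,j,k}^2,m_{i,j,k}^2$; the paper's matrix $M$ is $(m_{i,j,k}^2-m_{\mathbf p}^2)I$ plus a rank-one matrix. These parts are sound.

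The obstacle you flag is not real. $\mathsf E_1$ and $\mathsf E_2$ only permute positions $1,2,3$, so for each fixed tail the six permutations with $\{i,j,k\}$ in front form an invariant set. Hence $\mathsf E_1\mathsf E_2 g=m_{\mathbf p}^2 g$ holds on each such set separately. No factorisation over the tail is needed, only one block on which $g\neq 0$, and your propagation supplies that. Likewise, the eigenvalue $1$ is excluded simply because $m_{\mathbf p}<1$. The orthogonality to $\operatorname{Im}\mathsf E_1\cap\operatorname{Im}\mathsf E_2$ that you invoke is valid but superfluous there; it is, however, exactly what you need later.

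The regular-case paragraph has loose or incorrect justifications in three places.

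(i) The step $a=1$ does not use regularity. It uses that the block vector is $\mu$-orthogonal to the block indicator, i.e.\ $p_{j,k}p_{i,k}\,g(i,j,k,\dots)+p_{i,j}p_{k,j}\,g(i,k,j,\dots)+p_{j,i}p_{k,i}\,g(j,k,i,\dots)=0$. This is what \eqref{eqn:system} becomes when $m_{i,j,k}=m_{\mathbf p}$, and all its coefficients are strictly positive. For the triple $1<a<b$, $g(a,b,1,\dots)\neq 0$ then forces $(1,a)$ or $(1,b)$ into $S_f$. Say this explicitly instead of ``through the eigenvector''.

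(ii) You need the exact statement that, for regular $\mathbf p$ and $i<j<k$, $m_{i,j,k}=1/2$ if and only if $p_{i,j}=p_{j,k}=1/2$. To see it, set $t:=\max(p_{i,j},p_{j,k})\ge 1/2$ and note $p_{i,k}\ge t$. Then $m_{i,j,k}^2$ is non-increasing in $p_{i,k}$ and equals $t(1-t)$ at $p_{i,k}=t$. For $b<n$, the value $p_{1,b}=1/2$ comes from the $p_{i,j}$-half of this fact applied to $1<b<k$. It cannot come from $p_{b,k}=1/2$ ($k\neq 1$) plus monotonicity, because \eqref{eq:regular2}--\eqref{eq:regular3} bound $p_{1,b}$ from above only by the unknown $p_{1,k}$, $k>b$.

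(iii) For $b=n$, Lemma~\ref{lem:interval} does not give $[2,n-1]\subseteq H_{\mathbf p}$, since it only fills in between elements already known to lie in $H_{\mathbf p}$. Instead, start from $p_{1,a}=p_{a,n}=1/2$ and squeeze: $1/2\le p_{r-1,r}\le p_{a,r}\le p_{a,n}=1/2$ for $r>a$, and $1/2\le p_{r,r+1}\le p_{r,a}\le p_{1,a}=1/2$ for $r<a$.
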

\begin{proof}
    Let $f \ne 0$ be an eigenvector of $K$ with eigenvalue $(1-2m_{\mathbf p}\cos(\tfrac{{\pi}}{n}))/(n-1)$.
    Using \eqref{eqn:E3} in Lemma~\ref{lem:A2-lower}, we can write $\|\mathsf E_r f\| = c \sin(r \pi/n)$ for each $r \in \{1,\dots,n-1\}$.
    Since $\langle f,Kf \rangle > 0$, we must have $c > 0$.
    Furthermore, we apply \eqref{eqn:E1} and \eqref{eqn:E2} when $r = 1$ to deduce
    \begin{align}
        \mathsf E_1 \mathsf E_s f &= 0 \qquad \quad \text{ for all $s \ge 3$}; \label{eqn:E11} \\
         \mathsf E_1 \mathsf E_2 \mathsf E_1 f &= m_{\mathbf p}^2 \mathsf E_1 f. \label{eqn:E21}
    \end{align}
    We now show that $S_f$ is nonempty.
    Since $c >0$, there exists $x \in \mathfrak{S}_n$ such that $\mathsf E_1 f(x) \ne 0$.
    We may assume that the first two labels in $x$ are increasing since $\mathsf E_1 f(x^{\tau_1}) = \mathsf E_1 f(x) \ne 0$.
    Furthermore, using \eqref{eqn:E11}, we can deduce
    \begin{equation}
        \label{eqn:prop}
        \mathsf E_1 f(x^{\tau_s}) = -\frac{p_{x_s,x_{s+1}}}{p_{x_{s+1},x_{s}}}\mathsf E_1 f(x).
    \end{equation}
    Repeatedly applying \eqref{eqn:prop} as necessary to arrange the elements of $x$ from positions $3$ to $n$ in increasing order establishes our claim.

    Fix $i < j < k$ and write $m_{i,j,k} = \sqrt{p_{i,j}p_{j,k}p_{k,i} + p_{k,j}p_{j,i}p_{i,k}}$.
    Define $y_{i,j,k} = (i,j,k,\xi_4,\dots,\xi_n) \in \mathfrak S_n$ to be the permutation such that the first three positions are  $(i,j,k)$ and $\xi_4, \dots, \xi_n$ are in increasing order.
    Applying \eqref{eqn:E21}, we obtain the 
    %
%
vector equation
\begin{equation}
    \label{eqn:system}
     \begin{bmatrix}
    \mathsf E_1 f(y_{i,j,k}) & \mathsf E_1 f(y_{i,k,j}) & \mathsf E_1 f(y_{j,k,i})
\end{bmatrix}M = \mathbf 0,
\end{equation}
where
\[
    M = (m_{i,j,k}^2 - m_{\mathbf p}^2)I + \begin{bmatrix}
    p_{j,k}p_{i,k} & p_{j,k}p_{i,k} &  p_{j,k}p_{i,k}\\
    p_{i,j}p_{k,j}& p_{i,j}p_{k,j} & p_{i,j}p_{k,j} \\
    p_{j,i}p_{k,i} & p_{j,i}p_{k,i} & p_{j,i}p_{k,i}
\end{bmatrix}.
\]
Using \eqref{eqn:prop}, if any of $(i,j)$, $(i,k)$, or $(j,k)$ belong to $S_f$ then the vector $\left [ \begin{smallmatrix}
    \mathsf E_1 f(y_{i,j,k}) & \mathsf E_1 f(y_{i,k,j}) & \mathsf E_1 f(y_{j,k,i})
\end{smallmatrix} \right ]$ is non-zero, and thus a nullvector for $M$.
In such cases, we have $\det M = (1-m_{\mathbf p}^2)(m_{i,j,k}^2-m_{\mathbf p}^2)^2 = 0$.
Since $m_{\mathbf p} < 1$, we must have $m_{i,j,k} = m_{\mathbf p}$, as required.

Now suppose that $\mathbf p$ is regular and that $m_{\mathbf p}=1/2$.
It is straightforward to show that $m_{i,j,k} = 1/2$ implies $p_{i,j} = p_{j,k} = 1/2$.
Moreover, when $m_{i,j,k} = 1/2$, \eqref{eqn:system} reduces to 
\begin{equation}
\label{eqn:reduced}
    \frac{p_{i,k}}{2}\mathsf E_1 f(y_{i,j,k}) + \frac 1 4 \mathsf E_1 f(y_{i,k,j}) + \frac{p_{k,i}}{2} \mathsf E_1 f(y_{j,k,i}) = 0.
\end{equation}
Since all the coefficients of \eqref{eqn:reduced} are positive, if $(i,j) \in S_f$ then at least one of $(i,k)$ or $(j,k)$ must also be in $S_f$.
We now claim that the lexicographically minimal element of $S_f$ has the form $(1,b)$.
Suppose for a contradiction that $(a,b) \in S_f$ is the lexicographically minimal element with $a > 1$.
Consider the triple $i < j < k$ with $i = a-1$, $j = a$, and $k = b$.
Since $(j,k) \in S_f$, we have $m_{i,j,k} = 1/2$ and the vector $\left[\begin{smallmatrix} \mathsf E_1 f(y_{i,j,k}) & \mathsf E_1 f(y_{i,k,j}) & \mathsf E_1 f(y_{j,k,i}) \end{smallmatrix}\right]$ is non-zero.
Using \eqref{eqn:reduced}, at least one of $\mathsf E_1 f(y_{i,j,k})$ and $\mathsf E_1 f(y_{i,k,j})$ is non-zero.
By repeated use of \eqref{eqn:prop}, it follows that $(i,j)=(a-1,a)\in S_f$ or $(i,k)=(a-1,b)\in S_f$, which contradicts $(a,b)$ being lexicographically minimal.
Hence $a=1$.

Let $(1,b)$ be the lexicographically minimal element of $S_f$.
Firstly, suppose $b=n$.
Then, for each $a\in\{2,\dots,n-1\}$, since $(1,n)\in S_f$ we have $m_{1,a,n}=1/2$, hence $p_{1,a}=p_{a,n}=1/2$.
Now fix $a\in\{2,\dots,n-1\}$. 
If $r>a$, then repeated use of \eqref{eq:regular2} gives $p_{r-1,r}\le p_{a,r}$, while repeated use of \eqref{eq:regular3} gives $p_{a,r}\le p_{a,n}=1/2$.
Since \eqref{eq:regular1} gives $1/2\le p_{r-1,r}$, it follows that $1/2\le p_{r-1,r}\le p_{a,r}\le p_{a,n}=1/2$, so $p_{a,r}=1/2$. 
Similarly, if $r<a$, then repeated use of \eqref{eq:regular3} gives $p_{r,r+1}\le p_{r,a}$, while repeated use of \eqref{eq:regular2} gives $p_{r,a}\le p_{1,a}=1/2$.
Since \eqref{eq:regular1} gives $1/2\le p_{r,r+1}$, we obtain $1/2 \le p_{r,r+1}\le p_{r,a}\le p_{1,a}=1/2$, so $p_{r,a}=1/2$, and hence also $p_{a,r}=1/2$.
Therefore, $p_{a,r}=1/2$ for all $a\in\{2,\dots,n-1\}$ and all $r\ne a$, that is, $[2,n-1] \subseteq H_{\mathbf p}$. 

Finally, suppose $b < n$.
Consider $a \in \{2,\dots,b-1\}$.
Since $(1,b) \in S_f$, we have $m_{1,a,b} = 1/2$, which implies $p_{a,b} = p_{b,a} =1/2$.
Similarly, for $a \in \{b+1,\dots,n\}$, we have $m_{1,b,a} = 1/2$, which implies $p_{1,b} = p_{b,1} = 1/2$ and $p_{b,a} = 1/2$, thus completing the proof.
\end{proof}

Next, we prove a partial converse of Lemma~\ref{lem:tech} for the case when the probability vector $\mathbf p$ is regular.
Denote by $\mathfrak f_i \in \mathbb R^{\mathfrak S_n}$ the vector defined by
\[
\mathfrak f_i(x) = \cos \left ( \frac{(x^{-1}(i)-1/2)\pi}{n}   \right ).
\]
The vectors $\mathfrak f_i$ were originally identified by Wilson, who used these vectors to lower bound the multiplicity of the second-largest eigenvalue of $K$ in the uniform case \cite[Theorem 14]{W}.
For notional convenience, we define the function $\phi$ by $\phi(r):=\cos\!\left(\frac{(2r-1)\pi}{2n}\right)$ and record resulting standard trigonometric identities:
\begin{align}
\label{eqn:trig1}
        \phi(r-1)+\phi(r+1)&=2\cos\!\left(\frac{\pi}{n}\right)\phi(r), \quad \text{ for all $r \in \mathbb Z$}; \\
\label{eqn:trig2}
\begin{split}
    \phi(0) &= \phi(1);\\
   \phi(n+1)
&=
\phi(n).
\end{split}
\end{align}

\begin{lemma}\label{lem:assume_1/2}
Let $\mathbf p$ be a probability vector.  
    Then $\mathfrak f_i$ is an eigenvector of $K$ with eigenvalue $\left (1-\frac{1-\cos(\pi/n)}{n-1}\right )$ for each $i \in H_{\mathbf p}$.
\end{lemma}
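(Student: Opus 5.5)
Fix $i\in H_{\mathbf p}$. The plan is a direct computation of $K\mathfrak f_i = \frac{1}{n-1}\sum_{r=1}^{n-1}\mathsf E_r\mathfrak f_i$ using the explicit action \eqref{eqn:Eraction}. Fix $x\in\Sn$ and write $t:=x^{-1}(i)$ for the position of $i$ in $x$, so $\mathfrak f_i(x)=\phi(t)$. For each $r$, the swap $\tau_r$ changes the position of $i$ only if $r\in\{t-1,t\}$. Hence for $r\notin\{t-1,t\}$ we have $\mathfrak f_i(x^{\tau_r})=\mathfrak f_i(x)$. Since $\mathsf E_r$ has row sums $p_{x_r,x_{r+1}}+p_{x_{r+1},x_r}=1$, it follows that $\mathsf E_r\mathfrak f_i(x)=\phi(t)$ for these $r$, whatever the bias.

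For $r=t$ (when $t\le n-1$), the pair is $(x_t,x_{t+1})=(i,x_{t+1})$. Since $i\in H_{\mathbf p}$, both weights are $1/2$, so
\[
\mathsf E_t\mathfrak f_i(x)=\tfrac12\phi(t)+\tfrac12\phi(t+1).
\]
Likewise, for $r=t-1$ (when $t\ge2$) we get $\mathsf E_{t-1}\mathfrak f_i(x)=\tfrac12\phi(t)+\tfrac12\phi(t-1)$. This is the only place where the hypothesis $i\in H_{\mathbf p}$ enters.

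Summing over $r$ in the interior case $2\le t\le n-1$ gives
\[
(n-1)K\mathfrak f_i(x)=(n-3)\phi(t)+\phi(t)+\tfrac12\bigl(\phi(t-1)+\phi(t+1)\bigr).
\]
By \eqref{eqn:trig1} this equals $\bigl(n-2+\cos(\pi/n)\bigr)\phi(t)$. For the boundary cases $t=1$ and $t=n$, only one of the two special indices is present. I will handle these by formally adding the missing term, using \eqref{eqn:trig2}: for $t=1$ we have $\phi(0)=\phi(1)$, so a fictitious ``$r=0$'' contribution $\tfrac12\phi(1)+\tfrac12\phi(0)=\phi(t)$ is indistinguishable from an ordinary unaffected term. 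The count of unaffected terms therefore works out to the same formula, and the case $t=n$ is handled the same way using $\phi(n+1)=\phi(n)$. In all cases
\[
K\mathfrak f_i=\frac{n-2+\cos(\pi/n)}{n-1}\,\mathfrak f_i=\Bigl(1-\frac{1-\cos(\pi/n)}{n-1}\Bigr)\mathfrak f_i.
\]

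The only step needing care is the boundary bookkeeping at $t\in\{1,n\}$: checking that $n-2$ indices contribute $\phi(t)$ and one contributes $\tfrac12(\phi(t)+\phi(t\pm1))$, and that \eqref{eqn:trig2} makes this agree with the interior formula. I expect this to be routine. Finally, $\mathfrak f_i\neq0$ because, for example, $\phi(1)=\cos(\pi/(2n))>0$, so $\mathfrak f_i$ is a genuine eigenvector.
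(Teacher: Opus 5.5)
Your proposal is correct and follows the paper's proof essentially step by step: you compute $\mathsf E_r\mathfrak f_i(x)$ according to whether $r\in\{s-1,s\}$ (using $i\in H_{\mathbf p}$ only there), sum over $r$, and use \eqref{eqn:trig1} together with \eqref{eqn:trig2} to absorb the boundary cases $s\in\{1,n\}$. Your remark that $\mathfrak f_i\neq 0$ is a small addition that the paper leaves implicit.
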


\begin{proof}


Fix $x \in \mathfrak S_n$ and let $s = x^{-1}(i)$.
Applying \eqref{eqn:Eraction} yields
\[
\mathsf E_r \mathfrak f_i(x) = \begin{cases}
    \phi(s)  & \text{ if $r \not \in \{s-1,s\}$} \\
    \phi(s)/2 + \phi(s-1)/2 & \text{ if $r = s-1$} \\
    \phi(s)/2 + \phi(s+1)/2 & \text{ if $r = s$}.
\end{cases}
\]
Since $K$ is the average of the elementary transition matrices, we have
\[
K\mathfrak f_i(x) = \begin{cases} \frac{1}{n-1}\left ( (n-2)\phi(s) + \phi(s-1)/2 +   \phi(s+1)/2 \right ) & \text{ if $s \not \in \{1,n\}$}; \\
 \frac{1}{n-1}\left ( (n-2)\phi(1) + \phi(1)/2 +   \phi(2)/2 \right ) & \text{ if $s = 1$}; \\
 \frac{1}{n-1}\left ( (n-2)\phi(n) + \phi(n)/2 +   \phi(n-1)/2 \right ) & \text{ if $s = n$}.
\end{cases}
\]
Hence, using \eqref{eqn:trig1} and \eqref{eqn:trig2}, for all $s \in \{1,\dots,n\}$, we have
\begin{align*}
    K\mathfrak f_i(x) &= \frac{1}{n-1}\left ( (n-2)\phi(s) + \phi(s-1)/2 +   \phi(s+1)/2 \right ) \\
    &= \frac{1}{n-1}\left ( (n-2)\phi(s) + \phi(s)  \cos \left ( \frac{\pi}{n} \right ) \right ) \\
    &= \left (1 - \frac{1-\cos \left ( \frac{\pi}{n} \right )}{n-1} \right )\mathfrak f_i(x),
\end{align*}
as required.
\end{proof}

Now we can prove Theorem~\ref{thm:Fillc1}.

\begin{proof}[Proof of {Theorem~\ref{thm:Fillc1}}]
    Lemma~\ref{lem:assume_1/2} together with Corollary~\ref{cor:reg} imply one direction of Theorem~\ref{thm:Fillc1}.
For the other direction, Corollary~\ref{cor:reg} implies that $m_{\mathbf p} = 1/2$ and the theorem follows from Lemma~\ref{lem:tech}.
\end{proof}

\subsection{Multiplicity of the second-largest eigenvalue}

In this section, we consider the multiplicity of the second-largest eigenvalue of $K$ in the case when the probability vector $\mathbf p$ is regular, and the spectral gap of $K$ is minimal, that is, $\lambda_K=(1-\cos(\pi/n))/(n-1)$.

Define $\mathfrak g\in\mathbb R^{\mathfrak S_n}$ by
\[
\mathfrak g(x):=
\begin{cases}
\phi(x^{-1}(1))-\phi(x^{-1}(n)),
& \text{ if } x^{-1}(1)<x^{-1}(n),\\
\frac{p_{1,n}}{p_{n,1}}\left ( \phi(x^{-1}(1))-\phi(x^{-1}(n))\right ),
& \text{ if } x^{-1}(1)>x^{-1}(n).
\end{cases}
\]

\begin{lemma}
\label{lem:g}
    Let $n \ge 3$ and let $\mathbf p$ be a regular probability vector. 
    Suppose $|H_{\mathbf p}| = n-2$.
    Then $\mathfrak g$ is an eigenvector of $K$ with eigenvalue $\left (1-\frac{1-\cos(\pi/n)}{n-1}\right )$.
\end{lemma}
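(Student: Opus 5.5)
By Lemma~\ref{lem:interval}, $H_{\mathbf p}$ is an interval of size $n-2$, so it is $[1,n-2]$, $[2,n-1]$ or $[3,n]$. In the first and third cases the second part of Lemma~\ref{lem:interval} would force $\mathbf p$ to be uniform. Then $H_{\mathbf p}=[n]$, contradicting $|H_{\mathbf p}|=n-2<n$. Hence $H_{\mathbf p}=[2,n-1]$. Concretely, $p_{i,j}=1/2$ for every pair except possibly $\{1,n\}$, and $p_{1,n}$ is arbitrary. The plan is then to compute $\mathsf E_r\mathfrak g(x)$ directly from \eqref{eqn:Eraction} for each $r$, and sum, exactly as in the proof of Lemma~\ref{lem:assume_1/2}.

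Fix $x$, write $s=x^{-1}(1)$ and $t=x^{-1}(n)$, and write $\mathfrak g(x)=c_x(\phi(s)-\phi(t))$, where $c_x\in\{1,p_{1,n}/p_{n,1}\}$ records the relative order of $1$ and $n$. There are three cases for $r$.
\begin{itemize}
\item If $r\notin\{s-1,s,t-1,t\}$, then $\mathfrak g(x^{\tau_r})=\mathfrak g(x)$, so $\mathsf E_r\mathfrak g(x)=\mathfrak g(x)$.
\item If $\{r,r+1\}$ contains exactly one of $s,t$, the swap exchanges $1$ (or $n$) with a label in $[2,n-1]\subseteq H_{\mathbf p}$. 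The swap probability is $1/2$ and the relative order of $1$ and $n$ is unchanged, so $c_x$ is preserved. As in Lemma~\ref{lem:assume_1/2}, $\phi(s)$ (resp.\ $\phi(t)$) is replaced by $\tfrac12(\phi(s)+\phi(s\pm1))$ (resp.\ the analogous average).
\item If $\{r,r+1\}=\{s,t\}$, i.e.\ $1$ and $n$ are adjacent, this is the only place where the bias $p_{1,n}$ enters. Taking say $s=r$, $t=r+1$, one gets
\[
\mathsf E_r\mathfrak g(x)=p_{1,n}(\phi(r)-\phi(r+1))+p_{n,1}\tfrac{p_{1,n}}{p_{n,1}}(\phi(r+1)-\phi(r))=0,
\]
and symmetrically in the other order. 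The factor $p_{1,n}/p_{n,1}$ in the definition of $\mathfrak g$ is chosen precisely to make this vanish.
\end{itemize}

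Summing over $r$ and dividing by $n-1$, in the non-adjacent case $(n-1)K\mathfrak g(x)$ equals
\[
c_x\Bigl[(n-1)(\phi(s)-\phi(t))+\tfrac12\bigl(\phi(s-1)+\phi(s+1)-2\phi(s)\bigr)-\tfrac12\bigl(\phi(t-1)+\phi(t+1)-2\phi(t)\bigr)\Bigr].
\]
Here the boundary positions $s$ or $t\in\{1,n\}$ are handled by \eqref{eqn:trig2}, exactly as in Lemma~\ref{lem:assume_1/2}. Applying \eqref{eqn:trig1} turns this into $\bigl((n-1)-(1-\cos(\pi/n))\bigr)\mathfrak g(x)$, which is the desired eigenvalue.

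The main obstacle is the adjacent case $t=s+1$ (or $s=t+1$), where the terms for $r=s$ and $r=t-1$ coincide. There I will check that the contribution $\mathsf E_r\mathfrak g(x)=0$ agrees with what the generic formula would predict at that $r$. The generic prediction is $\mathfrak g(x)$ plus $c_x\cdot\tfrac12(\phi(s+1)-\phi(s))$ plus $c_x\cdot\bigl(-\tfrac12(\phi(t-1)-\phi(t))\bigr)$, which totals $\mathfrak g(x)+c_x(\phi(t)-\phi(s))=0$. The two therefore agree, and the same summation with \eqref{eqn:trig1}, \eqref{eqn:trig2} goes through. Finally, $\mathfrak g\neq0$, since for instance $\phi(1)\ne\phi(n)$, so $\mathfrak g$ is an eigenvector.
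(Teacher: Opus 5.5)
Your proposal is correct and follows essentially the same route as the paper: reduce via Lemma~\ref{lem:interval} to $p_{i,j}=1/2$ for every pair except $\{1,n\}$, compute $\mathsf E_r\mathfrak g(x)$ case by case (with the factor $p_{1,n}/p_{n,1}$ making the $\{r,r+1\}=\{s,t\}$ term vanish), and sum using \eqref{eqn:trig1} and \eqref{eqn:trig2}. The one organisational difference is that you check the vanishing adjacent term agrees with what the generic formula predicts, which merges the paper's separate $|s-t|=1$ and $|s-t|\ge 2$ computations; you also spell out why $H_{\mathbf p}=[2,n-1]$ and why $\mathfrak g\neq 0$, both of which the paper leaves implicit.
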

\begin{proof}
By Lemma~\ref{lem:interval}, $p_{i,j} = 1/2$ unless $\{i,j\} = \{1,n\}$.
Fix $x\in \Sn$, and write $s:=x^{-1}(1)$ and $t:=x^{-1}(n)$.
Then $\mathfrak g(x)=\varepsilon(x)\bigl(\phi(s)-\phi(t)\bigr)$ where
\[
\varepsilon(x):=
\begin{cases}
1,& \text{ if }s<t,\\
p_{1,n}/p_{n,1},& \text{ if } s>t.
\end{cases}
\]

The product $\mathsf E_r \mathfrak g(x)$ depends on $r$, $s$, and $t$.
Firstly, if $\{r,r+1\} \cap \{s,t\} = \emptyset$ then $\mathsf E_r \mathfrak g(x) = \mathfrak g(x)$.
Indeed, neither of the labels $1$ and $n$ is moved and hence $\varepsilon (x) = \varepsilon (x^{\tau_r})$ and $\mathfrak g(x) = \mathfrak g(x^{\tau_r})$.

Now suppose $s = r$ and $t \ne r+1$.
We claim 
\[
\mathsf E_r\mathfrak g(x)
=
\varepsilon(x)\left(\frac{\phi(s)+\phi(s+1)}{2}-\phi(t)\right).
\]
Indeed, the label $1$ is at position $r=s$, and some label $j\in H_{\mathbf p}$ is at position $r+1$. 
After applying $\tau_r$, the label $1$ moves to $s+1\neq t$, so the relative order of $1$ and $n$ is unchanged. 
Hence, $\varepsilon(x^{\tau_r})=\varepsilon(x)$.

Similarly, we find
\[
\mathsf E_r\mathfrak g(x)
= \begin{cases}
    \varepsilon(x)\left(\frac{\phi(s-1)+\phi(s)}{2}-\phi(t)\right) & \text{ if $s = r+1$ and $t \ne r$}; \\
    \varepsilon(x)\left(\phi(s)-\frac{\phi(t)+\phi(t+1)}{2}\right) & \text{ if $t = r$ and $s \ne r+1$}; \\
    \varepsilon(x)\left(\phi(s)-\frac{\phi(t-1)+\phi(t)}{2}\right) & \text{ if $t = r+1$ and $s \ne r$}.
\end{cases}
\]
Now, consider the case $\{r,r+1\}=\{s,t\}$.
Suppose first that $s<t$, so $s=r$ and $t=r+1$. 
Then $x_r=1$ and $x_{r+1}=n$, and
\[
\mathfrak g(x^{\tau_r})
=
\frac{p_{1,n}}{p_{n,1}}\bigl(\phi(t)-\phi(s)\bigr)
=
-\frac{p_{1,n}}{p_{n,1}}\,\mathfrak g(x).
\]
Hence,
\[
\mathsf E_r\mathfrak g(x)
=
p_{1,n}\mathfrak g(x)+p_{n,1}\mathfrak g(x^{\tau_r})
=
p_{1,n}\mathfrak g(x)-p_{1,n}\mathfrak g(x)
=
0.
\]
The case $s>t$ similarly concludes with $\mathsf E_r\mathfrak g(x) = 0$.

The cases when $|s-t|=1$ and $|s-t| \ge 2$ behave differently.  
First, we consider the case when $|s-t|=1$.
We will assume that $t=s+1$ since the case when $s = t+1$ is analogous.
Then, as shown above, $\mathsf E_s\mathfrak g(x)=0$, 
\[
\mathsf E_t\mathfrak g(x)
=
\varepsilon(x)\left(\phi(s)-\frac{\phi(t)+\phi(t+1)}{2}\right)
\]
and, for all $r\notin\{s-1,s,t\}$, we have
\[
\mathsf E_r\mathfrak g(x)=\mathfrak g(x).
\]
If $s=1$ and $t=2$, then $\mathsf E_1\mathfrak g(x)=0$, $\mathsf E_r\mathfrak g(x)=\mathfrak g(x)$ for all $r\ge 3$, and
\[
\mathsf E_2\mathfrak g(x)
=
\varepsilon(x)\left(\phi(1)-\frac{\phi(2)+\phi(3)}{2}\right).
\]
One can then check, using \eqref{eqn:trig1} and \eqref{eqn:trig2}, that
\[
K\mathfrak g(x)
=
\frac{n-2+\cos(\pi/n)}{n-1}\,\mathfrak g(x)
=
\left(1-\frac{1-\cos(\pi/n)}{n-1}\right)\mathfrak g(x),
\]
as required.
Otherwise, for $s\ge 2$, we have
\[
\mathsf E_{s-1}\mathfrak g(x)
=
\varepsilon(x)\left(\frac{\phi(s-1)+\phi(s)}{2}-\phi(t)\right)
\]
and
\[
\mathsf E_t\mathfrak g(x)
=
\varepsilon(x)\left(\phi(s)-\frac{\phi(t)+\phi(t+1)}{2}\right).
\]
Hence, if $s\ge 2$, then
\[
\sum_{r=1}^{n-1}\mathsf E_r\mathfrak g(x)
= (n-2)\mathfrak g(x) + 
\varepsilon(x)\left(
\frac{\phi(s-1)+\phi(s+1)}{2}
-\frac{\phi(t-1)+\phi(t+1)}{2}
\right),
\]
where we used $\phi(t-1)=\phi(s)$ and $\phi(s+1)=\phi(t)$.

Finally, we consider the case when $|s-t|\ge 2$.
Summing over all $r \in \{1,\dots,n-1\}$ yields
the same equation as above, that is,
\[
    \sum_{r=1}^{n-1}\mathsf E_r\mathfrak g(x)
= (n-2)\mathfrak g(x) + 
\varepsilon(x)\left (
\frac{\phi(s-1)+\phi(s+1)}{2}
-\frac{\phi(t-1)+\phi(t+1)}{2}
\right ),
\]
where we use \eqref{eqn:trig2} in the cases when $\{s,t\} \cap \{1,n\} \ne \emptyset$.
Using \eqref{eqn:trig1}, the above sum becomes
\[
\sum_{r=1}^{n-1}\mathsf E_r\mathfrak g(x)
= (n-2)\mathfrak g(x) +
\varepsilon(x)\left[
\cos\!\left(\frac{\pi}{n}\right)\phi(s)
-
\cos\!\left(\frac{\pi}{n}\right)\phi(t)
\right].
\]
Therefore,
\[
K\mathfrak g(x)
=
\frac{n-2+\cos(\pi/n)}{n-1}\,\mathfrak g(x)
=
\left(1-\frac{1-\cos(\pi/n)}{n-1}\right)\mathfrak g(x),
\]
as required.
\end{proof}

Now we are ready to prove the main result of this section, which provides an expression for the multiplicity of the second-largest eigenvalue of $K$ for extremal regular probability vectors.
The following theorem implies Theorem~\ref{thm:Fillc2}.
Indeed, the two formulations are equivalent by the similarity of $K$ and $I-K$, which is also used in the proof.

\begin{theorem}\label{thm:mult}
Let $n \ge 3$ and let $\mathbf p$ be a regular probability vector.
    Suppose $\lambda_K=(1-\cos(\pi/n))/(n-1)$.
    Then
        \[
\dim \ker \left (K-\lambda_K I \right ) =
\begin{cases}
|H_{\mathbf p}|, & |H_{\mathbf p}|\notin\{n,n-2\},\\
n-1, & |H_{\mathbf p}|\in\{n,n-2\}.
\end{cases}
    \]
\end{theorem}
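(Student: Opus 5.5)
The plan is to prove the two inequalities separately: a lower bound by exhibiting explicit eigenvectors, and an upper bound by embedding the eigenspace injectively into a space of small dimension. Throughout, set $\lambda:=(1-\cos(\pi/n))/(n-1)$. By Corollary~\ref{cor:reg} and the hypothesis, $\lambda$ is the smallest positive eigenvalue of $K$ and $m_{\mathbf p}=1/2$. By the similarity of $K$ and $I-K$~\cite[Theorem~2]{Fill}, $\dim\ker(K-\lambda I)=\dim\ker(K-(1-\lambda)I)$. I will therefore use the eigenvalue $1-\lambda$ for the lower bound and the eigenvalue $\lambda$ for the upper bound, where Lemma~\ref{lem:tech} applies. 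By Lemma~\ref{lem:interval}, $H_{\mathbf p}$ is an interval. Moreover, if $|H_{\mathbf p}|=n-1$ then $H_{\mathbf p}$ is $[1,n-1]$ or $[2,n]$, so $\mathbf p$ is uniform. Hence $|H_{\mathbf p}|=n-1$ never occurs, and if $|H_{\mathbf p}|=n-2$ with $H_{\mathbf p}\neq[2,n-1]$ then $\mathbf p$ is again uniform. So the three cases to handle are $H_{\mathbf p}=[n]$, $H_{\mathbf p}=[2,n-1]$, and $H_{\mathbf p}$ an interval of size at most $n-3$.

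\emph{Lower bound.} By Lemma~\ref{lem:assume_1/2}, the vectors $\mathfrak f_i$ with $i\in H_{\mathbf p}$ lie in $\ker(K-(1-\lambda)I)$. I will show they are linearly independent unless $H_{\mathbf p}=[n]$. The only relation should be $\sum_{i=1}^n\mathfrak f_i=\sum_{s=1}^n\phi(s)=0$, which I would check by evaluating on suitable permutations, using that $\mathfrak f_i(x)$ depends only on the position of $i$. This gives dimension at least $|H_{\mathbf p}|$ in general and at least $n-1$ when $H_{\mathbf p}=[n]$. When $H_{\mathbf p}=[2,n-1]$, I add the vector $\mathfrak g$ from Lemma~\ref{lem:g}. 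It is not in the span of $\mathfrak f_2,\dots,\mathfrak f_{n-1}$, since it depends on the positions of $1$ and $n$ and has the asymmetric factor $\varepsilon(x)$. This gives at least $n-1$ in that case as well.

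\emph{Upper bound.} Let $\mathcal E=\ker(K-\lambda I)$ and consider the linear map $\Psi:\mathcal E\to\mathbb R^{\{(i,j):i<j\}}$, $f\mapsto(\mathsf E_1f(z_{i,j}))_{i<j}$. I first show $\Psi$ is injective. If $\Psi f=0$, then \eqref{eqn:prop} (propagation along $\tau_s$, $s\ge3$) gives $\mathsf E_1 f=0$. By \eqref{eqn:E3} the constant $c$ is then $0$, so every $\mathsf E_rf=0$ and $Kf=0$. Since $\lambda>0$, this forces $f=0$.

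It then remains to bound the dimension of the set of possible images $w=\Psi f$. Lemma~\ref{lem:tech} gives two constraints. First, if $w_{i,j}\neq0$ then every triple containing $\{i,j\}$ has $m_{i,j,k}=1/2$. Second, for every triple $i<j<k$ that meets the support, the linear relation \eqref{eqn:reduced} holds after translating $\mathsf E_1f(y_{\cdot})$ into the $w$-coordinates via \eqref{eqn:prop}. In the regular case, $m_{i,j,k}=1/2$ forces $p_{i,j}=p_{j,k}=1/2$. Combining this with the monotonicity relations \eqref{eq:regular1}--\eqref{eq:regular3}, I expect the support of $w$ to be confined to pairs meeting $H_{\mathbf p}$, together with the pair $(1,n)$ in the case $H_{\mathbf p}=[2,n-1]$. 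The relations \eqref{eqn:reduced} should then behave like a coboundary condition. For example, in the uniform case they should force $w_{i,j}$ to be determined by one free parameter per label modulo a global relation, giving dimension at most $n-1$. In general they should leave one free parameter per element of $H_{\mathbf p}$, with the extra parameter for $(1,n)$ appearing exactly when $|H_{\mathbf p}|=n-2$.

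\emph{Main obstacle.} I expect the hardest step to be this final linear-algebra count: writing the relations \eqref{eqn:reduced} explicitly in terms of $w$, with the signs and ratio factors coming from \eqref{eqn:prop}, and showing that the solution space has dimension exactly $|H_{\mathbf p}|$ when $|H_{\mathbf p}|\le n-3$, and $n-1$ in the other two cases. My first approach would be to order the pairs lexicographically, as in the proof of Lemma~\ref{lem:tech}. I would then show that each $w_{i,j}$ is determined by the values $w_{1,b}$ (or $w_{a,b}$ with $a$ or $b$ in $H_{\mathbf p}$), and show that these free values are indexed by $H_{\mathbf p}$ up to the one global relation in the uniform case.
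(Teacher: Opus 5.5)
\textbf{The upper bound has a genuine gap.} Your lower bound is correct and follows the paper. The $\mathfrak f_i$ with $i\in H_{\mathbf p}$ are independent, apart from the single relation $\sum_{i=1}^n\mathfrak f_i=0$ when $H_{\mathbf p}=[n]$. Swapping the labels $1$ and $n$ separates $\mathfrak g$ from their span. Your map $\Psi$ is also injective.

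However, injectivity of $\Psi$ only gives $\dim\ker(K-\lambda_K I)\le\binom n2$. The step that would bring this down to $|H_{\mathbf p}|$ or $n-1$ is confining the support of $w=\Psi f$ and solving the relations \eqref{eqn:reduced}. That step is only announced (``I expect'', ``should''), not carried out.

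Nor is it as automatic as you suggest. The condition $m_{i,j,k}=1/2$ together with regularity does not by itself confine the support to pairs meeting $H_{\mathbf p}$. Take $n=5$ with $p_{1,4}=p_{1,5}=p_{2,5}=0.7$ and all other $p_{i,j}=1/2$ for $i<j$. This vector is regular with $H_{\mathbf p}=\{3\}$. Yet every triple containing $\{2,4\}$, namely $(1,2,4)$, $(2,3,4)$ and $(2,4,5)$, has $m_{i,j,k}=1/2$. So ruling out $(2,4)$ from the support would need the linear relations, which is exactly the computation you have not done.

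\textbf{The missing idea.} No such count is needed: you only need an injective linear map into a space of the right dimension, and the last part of Lemma~\ref{lem:tech} supplies one. For $0\neq f\in\ker(K-\lambda_K I)$, the set $S_f$ is non-empty and its lexicographically minimal element has the form $(1,b_f)$. If $b_f<n$ then $b_f\in H_{\mathbf p}$, while $b_f=n$ forces $[2,n-1]\subseteq H_{\mathbf p}$.

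Hence $f\mapsto(\mathsf E_1f(z_{1,j}))_{2\le j\le n}$ has trivial kernel, which gives $\dim\le n-1$ in the cases $|H_{\mathbf p}|\in\{n,n-2\}$.

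When $|H_{\mathbf p}|\notin\{n,n-2\}$, your own observation applies: $|H_{\mathbf p}|=n-1$ is impossible by Lemma~\ref{lem:interval}, so $b_f=n$ cannot occur. Thus $b_f\in H_{\mathbf p}$, and $1\notin H_{\mathbf p}$ here. Therefore the map $f\mapsto(\mathsf E_1f(z_{1,j}))_{j\in H_{\mathbf p}}$ is already injective, giving $\dim\le|H_{\mathbf p}|$.

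This is the paper's argument. It keeps only the coordinates $(1,j)$ of your $\Psi$ and avoids determining the full image altogether.
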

\begin{proof}
We begin by proving lower bounds.
First, suppose $|H_{\mathbf p}|\notin\{n,n-2\}$.
    By Lemma~\ref{lem:assume_1/2}, it suffices to show that the set of vectors $\{ \mathfrak f_i \; : \; i \in H_{\mathbf p}\}$ is linearly independent.
    Suppose $\sum_{i \in H_{\mathbf p}} c_i \mathfrak f_i = 0$ for some coefficients $c_i \in \mathbb R$.
    Evaluating this sum at a permutation $x \in \mathfrak S_n$, we have
    \begin{equation}
        \label{eqn:sumH}
        \sum_{i \in H_{\mathbf p}} c_i \phi(x^{-1}(i)) = 0.
    \end{equation}
    For a fixed $j \in H_{\mathbf p}$, let $U_{j} \subset \mathfrak S_n$ denote the set of permutations where the label $j$ is in position $1$, that is,
    \[
    U_{j} = \{x \in \mathfrak S_n \; : \; x^{-1}(j) = 1 \}.
    \]
    Observe that, for any $i \in H_{\mathbf p} \setminus \{j\}$ and $m \in \{2,\dots,n\}$, the number of $x \in U_{j}$ such that $x^{-1}(i) = m$ is $(n-2)!$. 
    Therefore, summing \eqref{eqn:sumH} over $U_{j}$ yields
    \begin{equation}
        \label{eqn:sumHSjk}
        c_j (n-1)! \phi(1) + \sum_{i \in H_{\mathbf p} \setminus \{j\}} c_i (n-2)! \sum_{m \in \{2,\dots,n\}} \phi(m) = 0.
    \end{equation}
    Using the identity $\cos(\theta) = -\cos(\pi - \theta)$, we find that
    \[
        \sum_{m \in \{2,\dots,n\}} \phi(m) = -\phi(1).
    \]
    Then, since $\cos(\pi/(2n))\neq 0$, we can divide \eqref{eqn:sumHSjk} by $(n-2)! \phi(1)$ to obtain
    \[
        (n-1)c_j - \sum_{i \in H_{\mathbf p} \setminus \{j\}} c_i = 0.
    \]
    Let $C = \sum_{i \in H_{\mathbf p}} c_i$. 
    We can rewrite the above as $n c_j - C = 0$, which means $c_j = C/n$ for all $j \in H_{\mathbf p}$.
    Write $c = c_j$ for any $j \in H_{\mathbf p}$.
    Then $C = |H_{\mathbf p}|c$, and we can furthermore deduce $(n - |H_{\mathbf p}|)c = 0$.
    
    Since $|H_{\mathbf p}|\notin\{n,n-2\}$, in particular $|H_{\mathbf p}|<n$
    meaning 
    $n - |H_{\mathbf p}| > 0$. 
    This immediately forces $c = 0$, which implies $\dim \ker \left (K-\lambda_K I \right ) \ge |H_{\mathbf p}|$.

    The case of $|H_{\mathbf p}| = n$ follows from the proof of \cite[Theorem 14]{W}, which shows that $\dim \ker \left (K-\lambda_K I \right ) \ge n-1$.
 
We may now assume that $|H_{\mathbf p}| =n-2$.
By Lemma~\ref{lem:g}, it remains to show that $\mathfrak g$ does not lie in the span of $\{\mathfrak f_i:i\in H_{\mathbf p}\}$.
Indeed, suppose that $x$ and $x'$ are two permutations with the same positions for all labels in $H_{\mathbf p}$, but with the labels $1$ and $n$ interchanged, and $x^{-1}(1)<x^{-1}(n)$. 
Then $\mathfrak f_i(x)=\mathfrak f_i(x')$ for each $i \in H_{\mathbf p}$.
However, $\mathfrak g(x')=-p_{1,n}\mathfrak g(x)/p_{n,1}\neq \mathfrak g(x)$.
Hence, $\mathfrak g$ does not lie in the span of $\{\mathfrak f_i:i\in H_{\mathbf p}\}$, which implies $\dim \ker \left (K-\lambda_K I \right ) \ge n-1$.

Now we prove the corresponding upper bounds.
Suppose first that $|H_{\mathbf p}| \in \{n-2,n\}$. 
Define the function $\mathcal F: \ker \left (K-\lambda_K I \right ) \to \mathbb R^{n-1}$ by
\[
 f \mapsto \left (\mathsf E_1f(z_{1,2}), \dots, \mathsf E_1f(z_{1,n})\right).
\]
We claim that $\mathcal F$ is injective. 
Indeed, suppose that $\mathcal F(f)=0$ for some non-zero $f\in \ker \left (K-\lambda_K I \right )$.
Note that Lemma~\ref{lem:tech} applies since $\mathbf p$ is regular and Corollary~\ref{cor:reg} implies that $m_{\mathbf p} = 1/2$. 
Let $(1,b_f)$ be the lexicographically minimal element of $S_f$.
Then clearly $b_f \in \{2,\dots,n\}$ and 
$\mathsf E_1 f(z_{1,b_f})\neq 0$, contradicting $\mathcal F(f)=0$.
Hence $\mathcal F$ is injective, and thus $\dim \ker \left (K-\lambda_K I \right )\le n-1$.

Suppose next that $|H_{\mathbf p}|\notin \{n,n-2\}$. 
Similar to above, we define the function $\mathcal G: \ker \left (K-\lambda_K I \right ) \to \mathbb R^{|H_{\mathbf p}|}$ by $f \mapsto (\mathsf E_1f(z_{1,j}))_{j \in H_{\mathbf p}}.$
We claim that $\mathcal G$ is injective. 
Indeed, suppose that $\mathcal G(f)=0$ for some non-zero $f\in \ker \left (K-\lambda_K I \right )$.
Let $(1,b_f)$ be the lexicographically minimal element of $S_f$, which exists by Lemma~\ref{lem:tech}.
Since $|H_{\mathbf p}|\notin \{n,n-2\}$, by Lemma~\ref{lem:tech} and Lemma~\ref{lem:interval}, we have $b_f<n$, and moreover $b_f\in H_{\mathbf p}$.
Hence $\mathsf E_1f(z_{1,b_f})\neq 0$, contradicting $\mathcal G(f)=0$.
Thus, $\mathcal G$ is injective, and therefore we find that $\dim \ker \left (K-\lambda_K I \right )\le |H_{\mathbf p}|$. 
\end{proof}

\end{document}